\documentclass[a4paper,12pt]{article}
\usepackage{amssymb}
\usepackage{amsthm}
\usepackage{graphicx}
\usepackage{amsmath}
\usepackage{fancyhdr}
\usepackage[all,cmtip]{xy}
\usepackage{stmaryrd}
\usepackage{comment} 
\usepackage[hidelinks]{hyperref}

\theoremstyle{plain}
\newtheorem{theorem}{Theorem}[section]
\newtheorem{proposition}[theorem]{Proposition}
\newtheorem{lemma}[theorem]{Lemma}
\newtheorem{corollary}[theorem]{Corollary}
\theoremstyle{remark}

\newtheorem{remark}[theorem]{\bf Remark}
\newtheorem{example}[theorem]{\bf Example}

\newtheorem*{acknowledgements}{\sc Acknowledgements}

\renewcommand{\baselinestretch}{1.08}

\newenvironment{reference}{
\begin{flushleft}\normalsize{\textsc{References}}\end{flushleft}
\begingroup
\renewcommand{\section}[2]{}
\begin{thebibliography}{99}
\setlength{\itemsep}{-5pt}
\small
}{
\end{thebibliography}
\endgroup
}

\makeatletter
\@addtoreset{equation}{section}
\makeatother

\makeatletter
\renewcommand{\section}{
\@startsection{section}{1}{\z@}
{3.5ex \@plus -1ex \@minus -.2ex}
{2.3ex \@plus.2ex}
{\reset@font\normalsize\scshape}}

\makeatother

\makeatletter
\renewcommand{\subsection}{
\@startsection{subsection}{2}{\z@}
{-3.5ex \@plus -1ex \@minus -.2ex}
{-2.3ex \@plus.2ex}
{\reset@font\normalsize\scshape}}
\makeatother

\usepackage[OT2,T1]{fontenc}
\DeclareSymbolFont{cyrletters}{OT2}{wncyr}{m}{n}
\DeclareMathSymbol{\Sha}{\mathalpha}{cyrletters}{"58}

\title{\vspace*{-22mm}
\large{\textbf{
On $\mathbb{Z}_2$-extensions of real quadratic fields with class group of $2$-rank three
\\
}}
\footnotetext{2020 Mathematics Subject Classification: 
11R23, 11R11.}
\footnotetext{Key words: 
Iwasawa module; Iwasawa invariants; Greenberg's conjecture; real quadratic fields. 
}
\footnotetext{Department of Mathematics, 
Rikkyo University, 
3-34-1, Nishi-Ikebukuro, 
Toshima-ku, 
Tokyo 171-8501, Japan. 
\texttt{ysaito.rmath@gmail.com}
}
}

\author{
\textsc{\normalsize Yuito Saito}
}

\date{}

\begin{document}
{\renewcommand{\baselinestretch}{1.05} \maketitle}

\vspace*{-11mm}
\renewcommand{\abstractname}{}
{\renewcommand{\baselinestretch}{1.05}
\begin{abstract}{\small 
\noindent
\textsc{Abstract.}  
In this paper, we study the unramified Iwasawa module over the cyclotomic $\mathbb{Z}_2$-extension of the real quadratic field $\mathbb{Q}(\sqrt{p_1p_2p_3p_4})$, where $p_1, p_2, p_3$, and $p_4$ are distinct odd prime numbers. 
We give a criterion for the finiteness of an unramified Iwasawa module of a number field. 
The criterion is based on the nonexistence of a $2$-group with a certain prescribed quotient.
Using this criterion, we construct an infinite family of such real quadratic fields with unramified Iwasawa module of type $\mathbb{Z}/4\mathbb{Z} \oplus \mathbb{Z}/2\mathbb{Z} \oplus \mathbb{Z}/2\mathbb{Z}$ and $2$-class group of type $\mathbb{Z}/2\mathbb{Z} \oplus \mathbb{Z}/2\mathbb{Z} \oplus \mathbb{Z}/2\mathbb{Z}$.
This gives the first example of an infinite family of real quadratic fields whose ideal class group has $2$-rank three and whose cyclotomic $\mathbb{Z}_2$-extension is totally ramified and satisfies Greenberg's conjecture.
}
\end{abstract}}

\section{Introduction}

Let $l$ be a fixed prime number, and let $\mathbb{Z}_l$ denote the ring of $l$-adic integers. 
Let $k$ be a finite extension of the rational number field $\mathbb{Q}$, and let $k_{\infty}$ be the cyclotomic $\mathbb{Z}_l$-extension of $k$. 
For each integer $n \ge 0$, $k_\infty$ contains a unique intermediate field $k_n$ which is a cyclic extension of degree $l^n$ over $k$. 
Let $L(k_n)$ be the Hilbert $l$-class field of $k_n$. 
For each $n \ge 0$, we denote by $A(k_n)$ the $l$-Sylow subgroup of the ideal class group of $k_n$. 
Then the Galois group $\mathrm{Gal}(L(k_n)/k_n)$ is isomorphic to $A(k_n)$ via the Artin map. 
We define the unramified Iwasawa module $X(k_\infty) = \mathrm{Gal}(L(k_{\infty})/k_{\infty})$, where $L(k_\infty)$ is the maximal unramified abelian pro-$l$-extension of $k_\infty$.
It is well known that $X(k_\infty)$ is a finitely generated torsion $\Lambda$-module, where $\Lambda$ is the formal power series ring $\mathbb{Z}_l[[T]]$, and that $X(k_\infty) \simeq \varprojlim A(k_n)$, where the projective limit is defined with respect to the norm maps. 
By Iwasawa's class number formula (cf.\ \cite{Iwa59, Was}), there exist integers $\lambda_l(k), \mu_l(k) \ge 0$ and $\nu_l(k)$ such that for all sufficiently large $n$, the order $|A(k_n)|$ of $A(k_n)$ is given by
\[
|A(k_n)| = l^{\lambda_l(k) n + \mu_l(k)l^n + \nu_l(k)}.
\]
These integers $\lambda_l(k), \mu_l(k)$, and $\nu_l(k)$ are called the Iwasawa $\lambda$-, $\mu$-, and $\nu$-invariants of the extension $k_\infty/k$, respectively.
Greenberg's conjecture asserts that the unramified Iwasawa module $X(k_\infty)$ is finite, i.e., $\lambda_l(k) = \mu_l(k) = 0$ for any totally real number field $k$ and any prime number $l$ (cf.\ \cite{Gre76}). 
Ferrero and Washington proved that $\mu_l(k) = 0$ for any abelian number field $k$ (cf.\ \cite{FW79}).
It follows that Greenberg's conjecture holds for such fields if and only if $\lambda_l(k) = 0$.
Greenberg's conjecture has been extensively studied from various aspects (cf.\ e.g.\ \cite{BM24,FKOT16,Kum20,Kum25,Oza09,Pag22}).
For the cyclotomic $\mathbb{Z}_2$-extension of real quadratic fields, Ozaki and Taya gave the first nontrivial infinite family for which $X(k_\infty)$ is finite (cf.\ \cite{OT97}).
Since then, various infinite families satisfying Greenberg's conjecture for $l=2$ have been constructed (cf.\ \cite{AM25, JA25, FK05, LS24, Miz04, Miz10, Miz26, MM27, M23, MM11, Nis06, OT97, Yam00}).
Recently, explicit structures of large $X(k_\infty)$ have been completely determined for some of these families. 
Furthermore, Mouhib \cite{M23} and Assarrar--Mouhib \cite{AM25} constructed infinite families for which the $2$-rank of the unramified Iwasawa module $X(k_\infty)$ can be arbitrarily large.
However, all of these known results have one property in common.
Suppose that the cyclotomic $\mathbb{Z}_2$-extension $k_{\infty}/k$ is totally ramified.
In every previously constructed infinite family of real quadratic fields satisfying Greenberg's conjecture for $k_{\infty}/k$, the $2$-rank of the ideal class group of the base quadratic field $k$ is at most two. 
Motivated by these studies, we construct the first explicit infinite family of real quadratic fields $k$ such that their ideal class groups have $2$-rank three, and Greenberg's conjecture holds true for $k_{\infty}/k$.
We also determine the exact structures of their unramified Iwasawa modules.

Our first main theorem is the following.

\begin{theorem}\label{MT1}
Let $p_1, p_2, p_3$ and $p_4$ be distinct prime numbers such that 
\[
p_1 \equiv p_2 \equiv 3 \pmod{8}, \quad
p_3 \equiv 5 \pmod{8}, \quad 
p_4 \equiv 15 \pmod{16}, \quad 
\]
\[
\left(\frac{p_1}{p_2} \right) =1,\ 
\left(\frac{p_1}{p_3} \right) =-1,\ 
\left(\frac{p_1}{p_4} \right) =-1,\ 
\left(\frac{p_2}{p_3} \right) =1 , 
\]
where $\big(\frac{*}{*}\big)$ is the Legendre symbol. 
Let $k= \mathbb{Q}(\sqrt{p_1p_2p_3p_4})$ or $\mathbb{Q}(\sqrt{2p_1p_2p_3p_4})$. 
Then we have
\[
X(k_\infty) \simeq \mathbb{Z}/4\mathbb{Z} \oplus \mathbb{Z}/2\mathbb{Z} \oplus \mathbb{Z}/2\mathbb{Z} \quad \text{and} \quad A(k) \simeq \mathbb{Z}/2\mathbb{Z} \oplus \mathbb{Z}/2\mathbb{Z} \oplus \mathbb{Z}/2\mathbb{Z}.
\]
In particular, $\lambda_2(k)=\mu_2(k)=0, \ \nu_2(k)=4$.
\end{theorem}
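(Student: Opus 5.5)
The plan is to compute the structure of $A(k)$ from genus theory and Rédei matrices, to push this up the tower to get $A(k_1)$ and $A(k_2)$, and then to prove finiteness of $X(k_\infty)$ by the group-theoretic criterion announced in the abstract. Once $X(k_\infty)$ is known to be finite, it is identified with $A(k_n)$ for $n$ large.

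\textbf{Step 1: the base field.} Let $d=p_1p_2p_3p_4$ (or $2d$). Genus theory gives
\[
\mathrm{rank}_2 A(k)=t-2
\]
whenever some prime dividing the discriminant is $\equiv 3 \pmod 4$, since then $-1$ is not a norm. Here $t$ is the number of ramified primes, so $t=5$ in both cases, because $2$ ramifies in both fields. Hence the $2$-rank is three.

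To see that $A(k)$ is elementary abelian, I will compute the $4$-rank via the Rédei matrix. Its entries are the Legendre symbols prescribed in the statement, together with those forced by quadratic reciprocity and the congruences; for example, $p_4\equiv 15 \pmod{16}$ fixes $\bigl(\frac{2}{p_4}\bigr)=1$ and $\bigl(\frac{-1}{p_4}\bigr)=-1$. I expect the matrix to have the rank needed for the $4$-rank to vanish, which gives $A(k)\simeq(\mathbb{Z}/2\mathbb{Z})^3$.

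\textbf{Step 2: total ramification and the first layer.} Since $2$ ramifies in $k$ and there is a unique prime above $2$, the extension $k_\infty/k$ is totally ramified at a single prime. By the standard fact that $A(k_n)\to A(k)$ is surjective, the module $X(k_\infty)$ is generated by three elements as a $\Lambda$-module.

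Next, $k_1=\mathbb{Q}(\sqrt d,\sqrt2)$ is a biquadratic field. I will compute $A(k_1)$ with Kuroda's class number formula, using genus theory relative to $\mathbb{Q}(\sqrt2)$ and the unit index. The goal is $A(k_1)\simeq\mathbb{Z}/4\oplus(\mathbb{Z}/2)^2$, together with the stabilization $A(k_2)\simeq A(k_1)$.

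An alternative is to use Chevalley's ambiguous class formula in $k_n/k$. The congruence $p_4\equiv15\pmod{16}$ means $p_4$ splits in $\mathbb{Q}(\sqrt2)$ but not completely in $\mathbb{Q}_2$ of the next layer. This controls how many primes above the $p_i$ appear in $k_n$, and hence the counts of ambiguous classes.

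\textbf{Step 3: finiteness via the criterion.} This is the main step and the main obstacle. The criterion from the abstract says roughly the following: if no finite $2$-group $G$ exists having a prescribed quotient (namely $\mathrm{Gal}(L^{(2)}(k_n)/k_n)$, or the abelianization data of $k_1$ and $k_2$) compatible with the $\Gamma$-action, then $X(k_\infty)$ is finite.

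I will apply it by examining the possible Galois groups of the second $2$-class field tower over $k$. These are extensions of $A(k)\simeq(\mathbb{Z}/2)^3$ on which the generator $\sigma$ of $\mathrm{Gal}(k_\infty/k)$ acts. I will then show that a $\lambda$-part cannot coexist with the computed sizes of $A(k_1)$ and $A(k_2)$. Concretely, I would show that $X/\nu_{1,0}X$ and $X/\nu_{2,0}X$ already have the same order. The hard part is controlling the norm map $N_{k_2/k_1}$ on the relevant classes, which comes down to capitulation and unit computations in $k_1$. These I would decide by explicit norm residue symbols at the primes above the $p_i$, using the Legendre symbol conditions.

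\textbf{Step 4: conclusion.} Once finiteness holds and stabilization occurs at $n=1$ (or $n=2$), we have $X(k_\infty)\simeq A(k_n)\simeq\mathbb{Z}/4\oplus(\mathbb{Z}/2)^2$. This gives $\lambda=\mu=0$. Since $|A(k_n)|=2^4$, we get $\nu=4$.
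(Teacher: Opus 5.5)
Your Steps 1, 2 and 4 run parallel to the paper. Genus theory and R\'edei--Reichardt give $A(k)\simeq(\mathbb{Z}/2\mathbb{Z})^3$ for both $\mathbb{Q}(\sqrt{m})$ and $\mathbb{Q}(\sqrt{2m})$. The ambiguous class formula for $k_1/\mathbb{Q}_1$ together with Kuroda's formula gives $A(k_1)\simeq\mathbb{Z}/4\mathbb{Z}\oplus(\mathbb{Z}/2\mathbb{Z})^2$. You do still owe the unit computation $Q(k_1)=1$, i.e.\ $\sqrt{\varepsilon_m},\sqrt{\varepsilon_{2m}},\sqrt{\varepsilon_m\varepsilon_{2m}}\notin k_1$; the paper gets this from ideal-class relations such as $[\mathfrak{l}_{k^\vee}]=[\mathfrak{p}_{1,k^\vee}]$, which give $k_1(\sqrt{\varepsilon_{2m}})=k_1(\sqrt{p_1})$.

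The genuine gap is the stabilization $A(k_2)\simeq A(k_1)$, which is the whole content of the theorem. You list it as a goal in Step 2, and Step 3 only restates it: equal orders at two consecutive layers \emph{is} the claim. (Also, with one totally ramified prime, $A(k_n)\simeq X/\omega_nX$, not $X/\nu_{n,0}X$.) ``Capitulation and unit computations in $k_1$'' and ``norm residue symbols'' do not amount to a mechanism. The object you propose to study is also the wrong one. $L(k_1)/k$ is ramified at the prime $\mathfrak{l}$ above $2$, so it does not lie in the unramified $2$-class field tower of $k$. The $\Gamma$-action only becomes visible in a group such as $\mathrm{Gal}(L(k_2)/k)$, where an inertia element at $\mathfrak{l}$ acts by conjugation on $\mathrm{Gal}(L(k_2)/k_2)$. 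Moreover, the paper's group-theoretic criterion works only in $2$-rank two, whereas $A(k_n)$ has $2$-rank three at every layer.

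The missing idea is to pass to the $\Sigma$-decomposed groups $A^\Sigma(k_n)=A(k_n)/\langle[\mathfrak{l}_{k_n}]\rangle$. Since $\mathfrak{l}$ is inert in $k(\sqrt{p_3})/k$, its class is not a square, so the rank drops to two. Thus $A^\Sigma(k)\simeq(\mathbb{Z}/2\mathbb{Z})^2$ and $A^\Sigma(k_1)\simeq\mathbb{Z}/4\mathbb{Z}\oplus\mathbb{Z}/2\mathbb{Z}$, and Fukuda's rank criterion gives $\mathrm{rank}_2A^\Sigma(k_2)=2$.

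You then need $\mathrm{rank}_4A^\Sigma(k_2)=1$, which comes from the class of $\mathfrak{p}_{4,k_2}$. It is $2$-torsion because $\mathfrak{p}_{4,\mathbb{Q}_2}$ ramifies in $k_2$ and $A(\mathbb{Q}_2)=0$. It is not a square because it is inert in the $\Sigma$-split unramified extension $k_2(\sqrt{p_1})/k_2$. That last point requires $p_4$ to split \emph{completely} in $\mathbb{Q}_2=\mathbb{Q}(\zeta_{16})^+$, so that the residue field is $\mathbb{F}_{p_4}$ and $\bigl(\frac{p_1}{p_4}\bigr)=-1$ applies. This is exactly what $p_4\equiv15\pmod{16}$ is for, so your reading that $p_4$ does not split completely in $\mathbb{Q}_2$ is backwards.

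With this, $X^\Sigma(k_2)=\langle g\rangle\oplus\langle h\rangle\simeq\mathbb{Z}/2^m\mathbb{Z}\oplus\mathbb{Z}/2\mathbb{Z}$. The order-$16$ group $\mathrm{Gal}(L^\Sigma(k_1)/k)$ is non-abelian; otherwise the inertia field of $\mathfrak{l}$ would be an unramified, $\Sigma$-split extension of $k$ of degree $8$. The inertia lift $\tilde\delta$ then does one of two things:
\begin{itemize}
\item it conjugates $h$ to some $g^{2+4t}h$, which has order $2^{m-1}$ and so forces $m=2$; or
\item it acts on $A^\Sigma(k_1)$ with $1+\gamma=0$, so $A^\Sigma(k)$ capitulates in $k_1$ and Fukuda's capitulation criterion applies.
\end{itemize}
Either way $|A^\Sigma(k_2)|=8$. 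Since $|D_\Sigma(k_2)|\le2$, this gives $|A(k_2)|\le16=|A(k_1)|$, and Fukuda's theorem then yields $X(k_\infty)\simeq\mathbb{Z}/4\mathbb{Z}\oplus(\mathbb{Z}/2\mathbb{Z})^2$. None of these ingredients appears in your plan.
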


By Dirichlet's theorem on arithmetic progressions, there exist infinitely many prime numbers $p_1, p_2, p_3$ and $p_4$ satisfying the conditions of Theorem \ref{MT1}. 
Therefore, we obtain the following corollary. 

\begin{corollary}\label{Cor1}
There exists an infinite family of real quadratic fields such that the cyclotomic $\mathbb{Z}_2$-extension is totally ramified, the unramified Iwasawa module is finite, and its ideal class group has $2$-rank exactly three.
\end{corollary}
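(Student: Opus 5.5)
The corollary is immediate from Theorem~\ref{MT1}, so the plan has three parts: produce infinitely many admissible prime quadruples, check that they give infinitely many distinct fields, and verify total ramification of $k_\infty/k$.

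\emph{Choosing the primes.} I will pick the primes one at a time. Each step uses Dirichlet's theorem together with the Chinese remainder theorem and quadratic reciprocity.
\begin{itemize}
\item Fix $p_1 \equiv 3 \pmod 8$.
\item Choose $p_2 \equiv 3 \pmod 8$ with $\left(\frac{p_1}{p_2}\right)=1$. By reciprocity this is the condition $\left(\frac{p_2}{p_1}\right)=-1$, since both primes are $3 \bmod 4$. It is a congruence condition modulo $p_1$, compatible with $p_2 \equiv 3 \pmod 8$ because $p_1$ is odd. Dirichlet gives infinitely many such $p_2$.
\item Choose $p_3 \equiv 5 \pmod 8$ with $\left(\frac{p_3}{p_1}\right)=-1$ and $\left(\frac{p_3}{p_2}\right)=1$. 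Since $p_3 \equiv 1 \pmod 4$, reciprocity turns these into exactly the required symbols $\left(\frac{p_1}{p_3}\right)=-1$ and $\left(\frac{p_2}{p_3}\right)=1$. The conditions sit modulo $8p_1p_2$, so Dirichlet applies again.
\item Choose $p_4 \equiv 15 \pmod{16}$ with $\left(\frac{p_1}{p_4}\right)=-1$. Both $p_1$ and $p_4$ are $3 \bmod 4$, so this means $\left(\frac{p_4}{p_1}\right)=1$, a condition modulo $p_1$ compatible with the condition modulo $16$.
\end{itemize}

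\emph{Infinitely many distinct fields.} Varying $p_4$ over its infinite set gives infinitely many distinct squarefree radicands $p_1p_2p_3p_4$. Hence the fields $k=\mathbb{Q}(\sqrt{p_1p_2p_3p_4})$ are pairwise distinct.

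\emph{Conclusions from Theorem~\ref{MT1}.} For each such $k$, the theorem gives that $X(k_\infty)\simeq \mathbb{Z}/4\mathbb{Z}\oplus\mathbb{Z}/2\mathbb{Z}\oplus\mathbb{Z}/2\mathbb{Z}$ is finite. It also gives $A(k)\simeq(\mathbb{Z}/2\mathbb{Z})^3$, so the class group of $k$ has $2$-rank exactly three.

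\emph{Total ramification.} The only prime of $\mathbb{Q}$ that ramifies in $\mathbb{Q}_\infty$ is $2$, and it is totally ramified there. So $k_\infty/k$ is totally ramified at the primes above $2$ provided $k\cap\mathbb{Q}_\infty=\mathbb{Q}$ and $2$ has a unique prime in $k$. The first condition holds because $\mathbb{Q}_1=\mathbb{Q}(\sqrt2)$ and $k\neq\mathbb{Q}(\sqrt2)$.

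For the second, compute $p_1p_2p_3p_4 \equiv 3\cdot3\cdot5\cdot7 \equiv 315 \equiv 3 \pmod 8$, using $p_4\equiv 7 \pmod 8$. Thus $d\equiv 3 \pmod 4$ and $2$ ramifies in $k=\mathbb{Q}(\sqrt d)$. Let $\mathfrak{l}$ be the unique prime of $k$ above $2$, so $e(\mathfrak{l}/2)=2$ in $k/\mathbb{Q}$.

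The inertia group of $\mathfrak{l}$ in $k_\infty/k$ has finite index. Fix $n$, and let $\mathfrak{L}$ be a prime of $k_n$ above $\mathfrak{l}$. In $k_n=k\mathbb{Q}_n$ the prime $2$ has ramification index $2^{n+1}$: indeed $e(\mathfrak{L}/2)$ is divisible by $2^n$, coming from $\mathbb{Q}_n$, and by $2$, coming from $k$. Since $[k_n:\mathbb{Q}]=2^{n+1}$, this forces $e(\mathfrak{L}/2)=2^{n+1}$. The two orders are not coprime, so this needs justification. It holds because $\mathbb{Q}_2(\sqrt d)$ with $d\equiv 3\pmod 8$ is not contained in the cyclotomic $\mathbb{Z}_2$-extension of $\mathbb{Q}_2$. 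That extension's quadratic layer is $\mathbb{Q}_2(\sqrt2)$, and $\sqrt d\notin\mathbb{Q}_2(\sqrt2)$ since $d\not\equiv 1$ and $2d$ is not a square. The local extension $\mathbb{Q}_2(\sqrt d,\zeta_{2^{n+2}}+\zeta_{2^{n+2}}^{-1})$ therefore has degree $2^{n+1}$. It is totally ramified, because $d\equiv 3 \pmod 4$ and $\mathbb{Q}_2(\sqrt{-1})$, $\mathbb{Q}_2(\sqrt{-2})$, $\mathbb{Q}_2(\sqrt{3})$, $\mathbb{Q}_2(\sqrt{2})$ are all ramified. This makes the extension $k_\infty/k$ totally ramified.

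This local verification is the only nonroutine step. The radicand $2p_1p_2p_3p_4$ can be handled in the same way: take $d'=2d$, and note that $d'/2=d\equiv 3\pmod 8$, so again $k_2\not\subset\mathbb{Q}_{2,\infty}$.
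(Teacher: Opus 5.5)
Your proposal is correct and is essentially the paper's argument. Dirichlet's theorem (which you make explicit via the Chinese remainder theorem and quadratic reciprocity) produces infinitely many admissible quadruples, Theorem~\ref{MT1} supplies finiteness of $X(k_\infty)$ and $A(k)\simeq(\mathbb{Z}/2\mathbb{Z})^3$, and $m\equiv 3\pmod 8$ gives the total ramification that the paper uses in the proof of Corollary~\ref{MT3}.

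One small slip in your local check: for $d\equiv 3\pmod 8$, the three quadratic subextensions of the $2$-adic compositum are generated by $\sqrt{3}$, $\sqrt{2}$, $\sqrt{6}$, not the four fields you list. Total ramification holds because none of these is the unramified quadratic extension generated by $\sqrt{5}$.
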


Note that the first layer of the cyclotomic $\mathbb{Z}_2$-extension of $\mathbb{Q}$ is given by $\mathbb{Q}_1 = \mathbb{Q}(\sqrt{2})$. 
The two real quadratic fields $\mathbb{Q}(\sqrt{p_1p_2p_3p_4})$ and $\mathbb{Q}(\sqrt{2p_1p_2p_3p_4})$ have the same cyclotomic $\mathbb Z_2$-extension.
Consequently, their unramified Iwasawa modules are naturally isomorphic as profinite abelian groups, and their Iwasawa invariants coincide.
Therefore, it suffices to consider the case of $k=\mathbb{Q}(\sqrt{p_1p_2p_3p_4})$.
Recall that the vanishing of the Iwasawa $\lambda$- and $\mu$-invariants is equivalent to the boundedness of the order of the $2$-Sylow subgroup $A(k_n)$ as $n \to \infty$. 

The proof of Theorem \ref{MT1} consists of two main steps.
First, we give a criterion for the finiteness of an unramified Iwasawa module of a number field (Theorem \ref{MT2}). 
The proof is based on an analysis of the structure of the Galois group $\mathrm{Gal}(L^{S}(k_1)/k)$ (defined later) and on a version of Fukuda's theorem (cf.\ \cite[Theorems 1 and 2]{Fuk94}).
The argument is purely group-theoretic and applies to general number fields satisfying the conditions of Theorem \ref{MT2}.
Using the validity of Leopoldt's conjecture for abelian number fields (cf.\ \cite[Theorem 2]{Bru67}), we obtain, as a corollary of this criterion, a criterion for Greenberg's conjecture for $l=2$ and real quadratic fields (Corollary \ref{MT3}).
Second, we show that the real quadratic field $k$ defined in Theorem \ref{MT1} satisfies the conditions of Corollary \ref{MT3}.
For this purpose, we determine the structures of certain quotients of the $2$-class groups of $k$ and  the first layer $k_1$ of its cyclotomic $\mathbb{Z}_2$-extension.
To deduce these structures, we use the following tools introduced in the next section: the genus formula (cf.\ \cite[Theorem 1]{Lem13}), the theorem of R\'{e}dei and Reichardt (cf.\ \cite[\S3, Satz]{RR}), Kuroda's class number formula (cf.\ \cite{Kur}), and Kubota's theorem (cf.\ \cite{Kub}).
Finally, applying Corollary \ref{MT3} completes the proof of Theorem \ref{MT1}.

\section{Preparations}
\subsection{Ranks of class groups.}

First, we state a brief remark on notation.
For any finite set $U$, we denote its cardinality by $|U|$. 
For an abelian group $A$, let $2A=A^2$ (resp.\ $4A=A^4$) be the subgroup of $A$ generated by the squares (resp.\ fourth powers) of elements of $A$.
Let $\mathrm{rank}_2(A)$ (resp.\ $\mathrm{rank}_4(A)$) be the $2$-rank (resp.\ the $4$-rank) of $A$, which is defined as the $\mathbb{Z}/2\mathbb{Z}$-dimension of $A/2A$ (resp.\ $2A /4A$).
For a number field $K$, we denote the $2$-Sylow subgroup of its ideal class group by $A(K)$ and its unit group by $E(K)$.
For an extension of number fields $K/F$, we denote its Galois group by $\mathrm{Gal}(K/F)$. 
If $K/F$ is finite, we denote its degree by $[K:F]$.
Let $A(K)^{\mathrm{Gal}(K/F)}$ be the subgroup of $A(K)$ generated by the ideal classes fixed by the action of $\mathrm{Gal}(K/F)$. 
Furthermore, the norm map from $K$ to $F$ is denoted by $N_{K/F}$.

In this section, we state some tools needed in the proofs of Theorems \ref{MT1} and \ref{MT2}.
Let $K/F$ be a quadratic extension of number fields. 
We denote by $t(K/F)$ the number of primes (finite and infinite) of $F$ which are ramified in $K/F$. 
To determine the $2$-rank of the ideal class group of the initial layer $k$ and the first layer $k_1$ of the cyclotomic $\mathbb{Z}_2$-extension in Theorem \ref{MT1}, we use the following proposition.

\begin{proposition}[{Genus formula \cite[Theorem 1]{Lem13}}] \label{GF}
In the above setting, the following equality holds:
\[
|A(K)^{\mathrm{Gal}(K/F)}| = \frac{|A(F)| \cdot 2^{t(K/F)-1}}{(E(F) : E(F) \cap N_{K/F} (K^{\times}))}.
\]
\end{proposition}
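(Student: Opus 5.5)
Since this genus formula is a classical result (Chevalley's ambiguous class number formula, restricted to $2$-parts), I would derive it from the full ambiguous class number formula rather than reprove it from scratch. Write $G=\mathrm{Gal}(K/F)=\langle\sigma\rangle$ and let $Cl(K)$, $Cl(F)$ be the full class groups. Chevalley's formula states
\[
|Cl(K)^{G}| = \frac{h(F)\cdot \prod_{v} e_v}{[K:F]\,(E(F):E(F)\cap N_{K/F}(K^\times))},
\]
where the product runs over all places of $F$, finite and infinite, and $e_v$ is the ramification index. For a quadratic extension each ramified place contributes $e_v=2$ and $[K:F]=2$, so the right side is $h(F)\,2^{t(K/F)-1}/(E(F):E(F)\cap N_{K/F}K^\times)$. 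It then remains to pass to $2$-parts.

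The reduction to $2$-Sylow subgroups goes as follows. The group $Cl(K)^G$ is finite abelian, so it decomposes as the product of its $q$-parts, and its $2$-part is exactly $A(K)^G$. For an odd prime $q$ the composite $N_{K/F}\circ j$, where $j:Cl(F)\to Cl(K)$ is extension of ideals, is squaring, hence bijective on $q$-parts. Since $G$ has order $2$, $(1+\sigma)=j\circ N$ acts on $Cl(K)[q^\infty]$; and on $G$-fixed elements $1+\sigma$ is multiplication by $2$, which is invertible on $q$-parts. Together these give $Cl(K)^G[q^\infty]\simeq Cl(F)[q^\infty]$: $j$ maps into the fixed part, it is injective because $N\circ j$ is, and it is surjective because every fixed $c$ equals $j(N(c/2))$, meaning $j$ applied to $N(c)^{1/2}$. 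The unit index divides $(E(F):E(F)^2)$, which is a power of $2$, and $2^{t-1}$ is also a power of $2$. Comparing the odd parts of both sides of Chevalley's formula then shows they agree, and dividing them out yields the stated equality for $|A(K)^{G}|$.

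If one wants a self-contained proof of Chevalley's formula itself, I would follow the standard route. First compute the group of ambiguous ideals $I(K)^G$ modulo $I(F)$, which is $(\mathbb{Z}/2)^{\#\text{ramified finite primes}}$. Then use the exact sequence $1\to P(K)^G/P(F)\to I(K)^G/P(F)\to Cl(K)^G\to H^1(G,P(K))$. Next apply Hilbert 90 to $1\to E(K)\to K^\times\to P(K)\to 1$, which identifies $P(K)^G/P(F)$ with $H^1(G,E(K))$. Finally evaluate $\ker(H^1(G,P(K))\to H^1(G,I(K)))$ through $\hat H^0(G,E(K))$ and the Hasse norm theorem, which brings in the infinite ramified places and the unit norm index. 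I expect this last step, the cohomological bookkeeping that uses the Herbrand quotient of units $h(E(K))=1/2$ (more generally $1/[K:F]$ times the product of the local degrees at the infinite places), to be the main obstacle. Since the paper cites \cite{Lem13}, however, the intended proof is just the citation plus the $2$-part reduction above.
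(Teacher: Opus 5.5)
Your proposal is correct and agrees with the paper, which gives no proof of its own and simply quotes the formula from \cite[Theorem 1]{Lem13}. Your passage from Chevalley's formula for the full class groups to the $2$-Sylow subgroups is sound: $N_{K/F}\circ j = 2$ and $j\circ N_{K/F} = 1+\sigma$ give $Cl(K)^{G}[q^\infty]\simeq Cl(F)[q^\infty]$ for odd $q$, and $E(F)^2\subset E(F)\cap N_{K/F}(K^\times)$ makes the unit index a power of $2$, so this just makes explicit a step the paper leaves to the citation. In your optional sketch, note that for a quadratic extension the unit Herbrand quotient is $2^{t_\infty-1}$, with $t_\infty$ the number of ramified infinite places, so it equals $1/2$ only when $t_\infty=0$; this quotient, not Hasse's norm theorem, is where the infinite places enter.
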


\noindent 
Suppose that $\mathrm{Gal}(K/F) = \langle \sigma \rangle$, the cyclic group generated by $\sigma$.
If $|A(F)| = 1$, then the ideal $\mathfrak{a}^{1+\sigma}$ is a principal ideal for any ideal class $[\mathfrak{a}] \in A(K)$. 
Consequently, $\sigma$ acts on $A(K)$ simply as $-1$, which implies $A(K)^{1+\sigma} \simeq 0$.
Therefore, $A(K)^{\sigma -1} = A(K)^2$.
This implies $|A(K)^{\mathrm{Gal}(K/F)}| = |A(K)/2A(K)|$.

From here on, we focus on the case where $K$ is a real quadratic field, so that $F = \mathbb{Q}$. 
To determine the $4$-rank of the narrow ideal class group of $K$, whose $2$-Sylow subgroup is denoted by $A^+(K)$, we recall some standard facts and notation. 
Let $D_K$ denote the discriminant of $K$. 
We can decompose $D_K$ into a product of prime discriminants $D_K = \pm 2^e p_1^* \cdots p_t^*$, where $e \in \{0,2, 3\}$ and each $p_i^* = \pm p_i \equiv 1 \pmod{4}$ corresponds to an odd prime factor $p_i$ of $D_K$. 
The narrow genus field $K_G^+$ of $K$ is constructed as $K_G^+ = \mathbb{Q}(\sqrt{\delta}, \sqrt{p_1^*}, \dots, \sqrt{p_t^*})$, where $\delta \in \{\pm 1, \pm 2\}$ and $\delta = 1$ whenever $e=0$. 
Then the genus field $K_G$ is defined as the maximal totally real subfield of $K_G^+$. 
The genus field $K_G$ (resp.\ $K_G^+$) is the maximal abelian extension of $\mathbb{Q}$ contained in the Hilbert $2$-class field (resp.\ the narrow $2$-class field) of $K$.
The nontrivial automorphism $\sigma \in \mathrm{Gal}(K/\mathbb{Q})$ acts trivially on $\mathrm{Gal}(K_G/K)$. 
By class field theory, $\mathrm{Gal}(K_G/K)$ is isomorphic to $A(K)/A(K)^{\sigma-1}$, which is the maximal quotient of $A(K)$ on which $\sigma$ acts trivially.
This implies $\mathrm{Gal}(K_G/K) \simeq A(K)/2A(K)$. 
Similarly, we see that $\mathrm{Gal}(K_G^+/K) \simeq A^+(K)/2A^+(K)$.

To state the next proposition, we introduce two finite sets, $S_1(K)$ and $S_2(K)$. 
Let $S_1(K)$ be the set consisting of sets of two integers $\{D_1, D_2\}$ such that $D_K = D_1D_2$ and $D_i \equiv 0 \text{ or } 1 \pmod{4}$ for $i = 1, 2$. 
We define $S_2(K)$ as the subset of $S_1(K)$ consisting of those sets $\{D_1, D_2\}$ which satisfy $\chi_{D_1}(p) = 1$ for all prime divisors $p$ of $D_2$, and $\chi_{D_2}(p) = 1$ for all prime divisors $p$ of $D_1$.
Here, $\chi_{D_i}(*)$ is the Kronecker symbol for a discriminant $D_i$.

\begin{proposition}[{R\'edei--Reichardt \cite[\S3, Satz]{RR}}] \label{RRT}
In the above setting, the following equalities hold:
\begin{align*}
|S_1(K)| &= |A^{+}(K)/2A^{+}(K)| = 2^{\mathrm{rank}_2(A^{+}(K))} , \\
|S_2(K)| &= |2A^{+}(K)/4A^{+}(K)| = 2^{\mathrm{rank}_4(A^{+}(K))} . 
\end{align*}
\end{proposition}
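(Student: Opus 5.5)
The plan is to prove both equalities by class field theory, using the correspondence between subfields of the narrow $2$-class field and quotients of $A^{+}(K)$.

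\textbf{The $2$-rank.} By the discussion above, $A^{+}(K)/2A^{+}(K) \simeq \mathrm{Gal}(K_G^+/K)$. The field $K_G^+$ is generated over $\mathbb{Q}$ by square roots of the $t$ or $t+1$ prime discriminants dividing $D_K$, subject to the one relation that their product is $D_K$. Hence $[K_G^+:K] = 2^{s-1}$, where $s$ is the number of prime discriminants in the factorization of $D_K$. On the other side, unordered factorizations $D_K = D_1D_2$ with $D_1, D_2$ discriminants correspond to choosing a subset of the prime discriminants, modulo swapping the subset with its complement. This gives $|S_1(K)| = 2^{s}/2 = 2^{s-1}$. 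One should check that every discriminant factor of $D_K$ is a product of prime discriminants, which follows from the congruence conditions $D_i \equiv 0,1 \pmod 4$. Equivalently, each $\{D_1,D_2\}$ gives the unramified-at-finite-primes extension $K(\sqrt{D_1})/K$, and these are exactly the quadratic subextensions of $K_G^+/K$ together with the trivial one.

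\textbf{The $4$-rank.} Duality gives
\[
|2A^{+}/4A^{+}| = |\{\chi \in \mathrm{Hom}(A^{+},\mathbb{Q}/\mathbb{Z}) : \chi \text{ of order dividing } 2,\ \chi = 2\psi\}|.
\]
So we must count the quadratic characters $\{D_1,D_2\}$ that lift to cyclic quartic characters of $A^{+}(K)$. Equivalently, we count the $K(\sqrt{D_1})/K$ that embed into a cyclic quartic extension of $K$ unramified at all finite primes. Since $\sigma$ acts as $-1$ on $A^{+}(K)/(\text{torsion coming from }A(\mathbb{Q})=1)$, such a quartic extension $M/K$ is Galois over $\mathbb{Q}$ with dihedral group of order $8$. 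Thus $M$ is the Galois closure of a quadratic extension $\mathbb{Q}(\sqrt{D_1})(\sqrt{\alpha})$ that is unramified over $K(\sqrt{D_1},\sqrt{D_2})$.

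The key step is Rédei's criterion: such a dihedral extension exists if and only if the conic $x^2 - D_1y^2 = D_2 z^2$ has a nontrivial rational solution. By Hasse--Minkowski and Hilbert reciprocity, this holds if and only if the local Hilbert symbols $(D_1,D_2)_p = 1$ at all primes $p$. These symbols reduce to $\chi_{D_1}(p)=1$ for $p\mid D_2$ and $\chi_{D_2}(p)=1$ for $p \mid D_1$. Given a solution, one takes $\alpha = x + y\sqrt{D_1}$ and adjusts it by a rational square factor so that the ramification at primes dividing $D_K$, and at $2$, is absorbed. This yields an extension unramified at finite primes over $K(\sqrt{D_1},\sqrt{D_2})$.

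\textbf{Main obstacle.} The hardest step is this local adjustment of $\alpha$, especially at $p=2$ and when $2 \mid D_K$. I would handle it by choosing a primitive solution and twisting by suitable divisors of $D_K$, using the fact that $D_i \equiv 0,1 \pmod 4$ controls the $2$-adic behaviour. Finally, the map from $S_2(K)$ to the quadratic characters that are squares is a bijection: the trivial pair $\{1,D_K\}$ corresponds to the trivial character, and distinct pairs give distinct fields $K(\sqrt{D_1})$. This gives $|S_2(K)| = 2^{\mathrm{rank}_4(A^+(K))}$.
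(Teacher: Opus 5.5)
The paper does not prove this proposition; it quotes R\'edei--Reichardt, so your argument has to stand on its own. Your $2$-rank count is the standard genus-theory argument and is fine. The $4$-rank part, however, has a genuine gap at the prime $2$.

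\textbf{The gap.} You claim that the conditions $(D_1,D_2)_p=1$ reduce to the Kronecker conditions defining $S_2(K)$. This is false when $4$ exactly divides $D_K$. Then the even factor is $D_1=4a$ with $a$ odd, and $D_2\equiv 1 \pmod 4$, so $(D_1,D_2)_2=(a,D_2)_2=1$ whatever $D_2 \bmod 8$ is. But $S_2(K)$ additionally demands $\chi_{D_2}(2)=1$, which fails whenever $D_2\equiv 5\pmod 8$.

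Accordingly, your key step also fails: solvability of $x^2-D_1y^2=D_2z^2$ is necessary for an unramified dihedral extension, but not sufficient. The paper's own family gives a counterexample. For $(p_1,p_2,p_3,p_4)=(3,11,5,31)$ we have $D_k=20460=124\cdot 165$ and $124+165=17^2$. So the conic has the point $(17,1,1)$ and all Hilbert symbols are trivial, yet $\chi_{165}(2)=-1$. Your count would put $\{124,165\}$ into $S_2(k)$ and give $\mathrm{rank}_4(A^+(k))\ge 1$, contradicting Lemma~\ref{4_1}. The same happens for $K=\mathbb{Q}(\sqrt{39})$ and the pair $\{12,13\}$: $5^2-12=13$, but $h^+(K)=4$ with $\mathrm{rank}_2(A^+(K))=2$, so $\mathrm{rank}_4(A^+(K))=0$.

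\textbf{Why no adjustment of $\alpha$ helps.} The local adjustment at $2$ that you flag as the main obstacle is an actual obstruction. Suppose $2\mid D_1$ and $2$ is inert in $\mathbb{Q}(\sqrt{D_2})$. Then the decomposition group at $2$ in $\mathrm{Gal}(\mathbb{Q}(\sqrt{D_1},\sqrt{D_2})/\mathbb{Q})$ is the whole Klein group. Since $D_4\to(\mathbb{Z}/2\mathbb{Z})^2$ is a Frattini quotient, the decomposition group in $\mathrm{Gal}(M/\mathbb{Q})$ is all of $D_4$. But if $M/K$ were unramified above $2$, then $M_w$ would be the compositum of $K_v$ with the unramified quartic extension of $\mathbb{Q}_2$, which is abelian over $\mathbb{Q}_2$. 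This is a contradiction.

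\textbf{The missing condition.} The correct criterion is: $\{D_1,D_2\}$ yields a cyclic quartic extension of $K$ unramified at all finite primes if and only if $(D_1,D_2)_v=1$ at all places and, when $2\mid D_K$, $2$ splits in $\mathbb{Q}(\sqrt{D_j})$ for the odd factor $D_j$. Necessity of the second condition is the local argument above. Sufficiency is your twisting argument made precise:
\begin{itemize}
\item Odd $p\mid D_K$ are automatically unramified in $M/K$. Tame inertia is cyclic and not contained in $\mathrm{Gal}(M/K)$, the unique cyclic subgroup of order $4$ in $D_4$. So it has order $2$ and meets $\mathrm{Gal}(M/K)$ trivially.
\item Primes $q\nmid 2D_K$ are removed by multiplying $\alpha$ by $q^*$.
\item At $2$, a further factor from $\{\pm1,\pm2\}$ suffices. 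When $2\mid D_K$ this uses the splitting condition: the decomposition group then lies in a Klein four-group, so $M_w=K_v(\sqrt{c})$ with $c\in\mathbb{Q}_2^\times$.
\end{itemize}
Together with Hilbert reciprocity, this combined condition is exactly the Kronecker-symbol condition defining $S_2(K)$.
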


\subsection{Fundamental units and class number formulas.}

Next, we introduce a useful tool for the class number and fundamental units of real biquadratic extensions of $\mathbb{Q}$. 
It is easy to see that the first layer $k_1 = k(\sqrt{2})$ of the cyclotomic $\mathbb{Z}_2$-extension of the base field $k$ in Theorem \ref{MT1} is a biquadratic extension of $\mathbb{Q}$. 
To determine the order of $A(k_1)$, we use the following proposition, which combines Kuroda's class number formula (cf.\ \cite{Kur}) with Kubota's theorem (cf.\ \cite{Kub}).

\begin{proposition}[{Kuroda \cite{Kur}, Kubota \cite{Kub}}]\label{KKT}
Let $K$ be a real biquadratic extension of $\mathbb{Q}$ and let $F_i \ (i=1,2,3) $ be the three quadratic intermediate fields of $K/\mathbb{Q}$. 
Let $\varepsilon_i$ be a fundamental unit of $F_i$. 
We denote by $h(K)$ and $h(F_i)$ the class numbers of $K$ and $F_i$, respectively. 
Put $Q(K) = (E(K) : \langle -1, \varepsilon_1, \varepsilon_2, \varepsilon_3 \rangle)$. 
Then we have
\[
h(K) = \frac{1}{4} \cdot Q(K)\cdot h(F_1)\cdot h(F_2) \cdot h(F_3).
\]
Furthermore, $Q(K) \in \{1,2,4\}$, and a system of fundamental units of $K$ is given by exactly one of the following forms:
\begin{align*}
\textup{(1)} &\quad \{ \varepsilon_1, \varepsilon_2, \varepsilon_3 \} \\
\textup{(2)} &\quad \{ \sqrt{\varepsilon_1}, \varepsilon_2, \varepsilon_3 \} \qquad (N(\varepsilon_1)=1)\\
\textup{(3)} &\quad \{ \sqrt{\varepsilon_1}, \sqrt{\varepsilon_2}, \varepsilon_3 \} \qquad (N(\varepsilon_1)=N(\varepsilon_2)=1)\\
\textup{(4)} &\quad \{ \sqrt{\varepsilon_1\varepsilon_2}, \varepsilon_2, \varepsilon_3 \} \qquad (N(\varepsilon_1)=N(\varepsilon_2)=1) \\
\textup{(5)} &\quad \{ \sqrt{\varepsilon_1\varepsilon_3}, \sqrt{\varepsilon_2}, \varepsilon_3 \} \qquad (N(\varepsilon_1)=N(\varepsilon_2)=N(\varepsilon_3)=1)\\
\textup{(6)} &\quad \{ \sqrt{\varepsilon_1\varepsilon_2}, \sqrt{\varepsilon_2\varepsilon_3}, \sqrt{\varepsilon_3\varepsilon_1} \} \qquad (N(\varepsilon_1)=N(\varepsilon_2)=N(\varepsilon_3)=1) \\
\textup{(7)} &\quad \{ \sqrt{\varepsilon_1\varepsilon_2\varepsilon_3}, \varepsilon_2, \varepsilon_3 \} \qquad (N(\varepsilon_1)=N(\varepsilon_2)=N(\varepsilon_3)=\pm 1)
\end{align*}
where $N(\varepsilon_i)$ denotes the absolute norm $N_{F_i/\mathbb{Q}}(\varepsilon_i)$.
\end{proposition}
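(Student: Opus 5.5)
This is a classical result (Kuroda's class number formula for $V_4$-extensions together with Kubota's classification of unit groups), so the plan is to recall its standard proof rather than derive anything new.

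\emph{Class number formula.} Write $G=\mathrm{Gal}(K/\mathbb{Q})\simeq(\mathbb{Z}/2\mathbb{Z})^2$. I would start from the Brauer relation among the Dedekind zeta functions:
\[
\zeta_K(s)\,\zeta_{\mathbb{Q}}(s)^2=\zeta_{F_1}(s)\,\zeta_{F_2}(s)\,\zeta_{F_3}(s).
\]
This holds because the regular character of $G$ plus twice the trivial character equals the sum of the three permutation characters $\mathrm{Ind}_{H_i}^G 1$, where $H_i=\mathrm{Gal}(K/F_i)$. Comparing residues at $s=1$ via the analytic class number formula gives
\[
h(K)R(K)=\lambda\, h(F_1)h(F_2)h(F_3)\,R(F_1)R(F_2)R(F_3)
\]
for an explicit constant $\lambda$. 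All fields are totally real, so there are no $\pi$-factors, and the roots of unity are $\pm1$ throughout. The discriminants cancel by the conductor–discriminant formula, $d_K=d_{F_1}d_{F_2}d_{F_3}$.

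Next I would compute the regulator of the subgroup $\langle -1,\varepsilon_1,\varepsilon_2,\varepsilon_3\rangle$ directly. Each $\varepsilon_i$ is fixed by $H_i$, so its vector of logarithms at the four real places of $K$ has the form $\pm\log\varepsilon_i$ with a sign pattern given by a nontrivial character of $G$. The resulting $3\times3$ minor is a character determinant, and one finds that this regulator equals $4\,R(F_1)R(F_2)R(F_3)$, up to the normalization fixed by the choice of $\lambda$. Since $R(K)$ equals this regulator divided by the index $Q(K)$, substituting yields $h(K)=\tfrac14 Q(K)h(F_1)h(F_2)h(F_3)$. The only care needed in this step is tracking the powers of $2$ from the residues and from the regulator normalization.

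\emph{Bound on $Q(K)$.} For any $\eta\in E(K)$ we have
\[
\eta^2=\pm\,\frac{N_{K/F_1}(\eta)\,N_{K/F_2}(\eta)}{\sigma_3(N_{K/F_3}(\eta))},
\]
where $\sigma_3$ is the nontrivial element of $H_3$. This follows from $(1+\sigma_1)+(1+\sigma_2)-\sigma_3(1+\sigma_3)=2$ in $\mathbb{Z}[G]$ when $\sigma_1\sigma_2=\sigma_3$. Hence $E(K)^2\subset\langle -1,\varepsilon_1,\varepsilon_2,\varepsilon_3\rangle$, so $Q(K)$ divides $2^3$. To sharpen this to $Q(K)\in\{1,2,4\}$ I would argue as follows.
\begin{itemize}
\item If $\sqrt{\pm\varepsilon_1^{a}\varepsilon_2^{b}\varepsilon_3^{c}}\in K$, then applying the norm maps $N_{K/F_i}$ constrains the exponents $a,b,c$ and the norms $N(\varepsilon_i)$.
\item For example, $\sqrt{\varepsilon_1}\in K$ forces $N(\varepsilon_1)=1$: its conjugate under $\sigma_1$ is $\pm\sqrt{\varepsilon_1}$, and taking norms down to $F_1$ gives $\varepsilon_1=\pm\sqrt{\varepsilon_1}^{\,2}$, while applying the full norm to $\mathbb{Q}$ gives $N(\varepsilon_1)$ as a square, hence $1$.
\item The index $8$ case, where all three $\sqrt{\varepsilon_i}$ lie in $K$ together with $\sqrt{-1}$-type combinations, is excluded in the standard way. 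One shows that $\varepsilon_1\varepsilon_2\varepsilon_3$ and the pairwise products cannot all become squares simultaneously with the single squares; this uses $-1\notin K^{\times2}$, since $K$ is real.
\end{itemize}

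\emph{Classification of unit systems.} Enumerating the subspaces of $\langle\varepsilon_1,\varepsilon_2,\varepsilon_3\rangle/\text{squares}\simeq\mathbb{F}_2^3$ that can consist of squares in $E(K)$, and reordering indices, gives exactly the seven types (1)--(7), each with the stated norm conditions.

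I expect the main obstacle to be this last classification step: deciding which subspaces are actually possible and which norm constraints they force, especially in case (7), where the norms may be $-1$. I would rely on Kubota's original case analysis for this, since the statement is cited from \cite{Kur,Kub}.
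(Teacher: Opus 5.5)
The paper gives no proof of this proposition; it is quoted from Kuroda and Kubota, so your outline is being compared with the standard argument. Your derivation of $h(K)=\frac14 Q(K)h(F_1)h(F_2)h(F_3)$ from the Brauer relation is correct: the constant is $\lambda=1$, and the $3\times3$ character determinant is exactly $4$.

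There are two slips in the unit part. First, $(1+\sigma_1)+(1+\sigma_2)-\sigma_3(1+\sigma_3)$ equals $1+\sigma_1+\sigma_2-\sigma_3$, not $2$, and $\sigma_3$ fixes $N_{K/F_3}(\eta)$. The correct relation is $(1+\sigma_1)+(1+\sigma_2)-\sigma_1(1+\sigma_3)=2$, i.e.\ $\eta^2=N_{K/F_1}(\eta)N_{K/F_2}(\eta)/\sigma_1(N_{K/F_3}(\eta))$, and this does give $Q(K)\mid 8$. Second, $\sqrt{\varepsilon_1}\in K$ forces $N(\varepsilon_1)=1$ because $N(\varepsilon_1)=\bigl(\sqrt{\varepsilon_1}\,\sigma_2(\sqrt{\varepsilon_1})\bigr)^2>0$ in the real field $F_2$. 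Your norm down to $\mathbb{Q}$ equals $N(\varepsilon_1)^2$ whatever $N(\varepsilon_1)$ is, so it proves nothing.

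The genuine gap is the exclusion of $Q(K)=8$. Since $E(K)^2\subset\langle-1,\varepsilon_1,\varepsilon_2,\varepsilon_3\rangle$, $Q(K)=8$ means exactly that each $\varepsilon_i$ (normalized to be $>1$) is a square in $K$. Then every product of the $\varepsilon_i$ is a square too, so there is no conflict between ``products'' and ``single squares'' to exploit. The reality of $K$ only forces $N(\varepsilon_1)=N(\varepsilon_2)=N(\varepsilon_3)=1$. Neither signs nor the Galois-module structure rule this configuration out, since the lattice $\frac12\bigoplus_i E(F_i)/\{\pm1\}$ is $G$-stable. So an arithmetic input is needed.

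For instance, write $F_i=\mathbb{Q}(\sqrt{d_i})$ with $d_i>1$ squarefree. Suppose $\eta\in K$ and $\eta^2=\varepsilon_1$.
\begin{enumerate}
\item[(a)] Then $\sigma_1(\eta)=-\eta$, since otherwise $\eta\in F_1$, contradicting that $\varepsilon_1$ is fundamental.
\item[(b)] Hence $\eta\in\sqrt{d_2}\,F_1$ and $\varepsilon_1\in d_2F_1^{\times2}=d_3F_1^{\times2}$.
\item[(c)] So $d_2\mathcal{O}_{F_1}$ and $d_3\mathcal{O}_{F_1}$ are squares of ideals. Every odd prime dividing $d_2$ or $d_3$ therefore ramifies in $F_1$, i.e.\ divides $d_1$.
\end{enumerate}
Doing the same for $\varepsilon_2$ and $\varepsilon_3$ shows that $d_1,d_2,d_3$ have the same odd prime divisors. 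Since $d_1d_2d_3$ is a perfect square, they have none. Then $d_1=d_2=d_3=2$, which is absurd.

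Once $Q(K)\le4$ is established, the step you flagged as the main obstacle is routine. The orbits, under permutation of indices, of subspaces of $\mathbb{F}_2^3$ of dimension at most $2$ give exactly the seven types. The norm conditions come from total positivity of squares. When $N(\varepsilon_i)=-1$, the sign pattern of $\varepsilon_i$ at the four real places is the character $\chi_i$ of $G$ with kernel $H_i$, and $\chi_1\chi_2\chi_3=1$.
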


\subsection{Boundedness of class groups.}

Finally, we introduce some notation for $\mathbb Z_l$-extensions. 
For a fixed prime number $l$, let $k_{\infty}$ be a $\mathbb{Z}_l$-extension (not necessarily cyclotomic) of a number field $k$, and let $k_n$ be its $n$-th layer. 
For a finite abelian $l$-group $A$, let $lA=A^l$ and define $\mathrm{rank}_l(A) = \dim_{\mathbb Z/l\mathbb Z}(A/lA)$.
Let $\Sigma$ denote the set of all primes of $k$ which are ramified in $k_\infty/k$.
In particular, every prime in $\Sigma$ lies above $l$.
Let $S$ be a subset of $\Sigma$. 
Let $L^{S}(k_n)$ be the maximal $S$-decomposed unramified abelian $l$-extension of $k_n$, that is, the maximal unramified abelian $l$-extension of $k_n$ in which all prime ideals of $k_n$ lying above $S$ split completely.
Let $D_{S}(k_n)$ be the subgroup of $A(k_n)$ generated by the ideal classes containing a product of prime ideals of $k_n$ lying above $S$. 
Let $A^{S}(k_n)$ be the $l$-Sylow subgroup of the $S$-ideal class group of $k_n$, that is, $A^{S}(k_n) = A(k_n)/D_{S}(k_n)$.
Then we have the isomorphism $\mathrm{Gal}(L^{S}(k_n)/k_n) \simeq A^{S}(k_n)$ via the Artin map.
We note that, in this paper, we consider only the cases in which $S= \emptyset$ or $S= \Sigma$. 
In particular, if $S = \emptyset$, then $L^{\emptyset}(k_n)$ and $A^{\emptyset}(k_n)$ are the usual Hilbert $l$-class field $L(k_n)$ and $l$-class group $A(k_n)$, respectively.

We assume that every prime ideal in $\Sigma$ is totally ramified in $k_{\infty}/k$.
Let $\gamma$ be a fixed topological generator of $\Gamma =\mathrm{Gal}(k_{\infty}/k)$. 
We identify the complete group ring $\mathbb{Z}_l[[\Gamma]]$ with the formal power series ring $\Lambda = \mathbb{Z}_l[[T]]$ by the correspondence $\gamma \leftrightarrow 1+T$. 
We define the $S$-decomposed unramified Iwasawa module $X^{S} = X^{S}(k_\infty)=\mathrm{Gal}(L^{S}(k_{\infty})/k_{\infty})$, where $L^{S}(k_\infty)$ is the maximal $S$-decomposed unramified abelian pro-$l$-extension of $k_\infty$, that is, the maximal unramified abelian pro-$l$-extension of $k_{\infty}$ in which all primes of $k_{\infty}$ lying above $S$ split completely.
It is well known that $X^{S}$ is a finitely generated torsion $\Lambda$-module and that $X^{S}(k_\infty)\simeq\varprojlim A^{S}(k_n)$, where the projective limit is defined with respect to the norm maps.
In particular, $X^{\emptyset}(k_\infty)= X(k_\infty)$.
Furthermore, there exists a $\Lambda$-submodule $Y^{S}$ of $X^{S}$ such that $X^{S}/(\omega_n/T)Y^{S}  \simeq A^{S}(k_n)$ for all $n \ge 0$, where $\omega_n = (1+T)^{l^n}-1$ (cf.\ \cite{Iwa73, Was}).

We state the following variant of Fukuda's theorem in a more general form (cf.\ \cite[Theorems 1 and 2]{Fuk94}).

\begin{proposition}\label{FT1}
Let $k_{\infty}/k$ be a $\mathbb{Z}_{l}$-extension of a number field $k$ and let $S$ be a subset of $\Sigma$. 
Then the following assertions hold:
\begin{enumerate}
\item[\textup{(1)}] 
Let $n_0 \ge 0$ be an integer such that every prime ideal of $k_{n_0}$ lying above $\Sigma$ is totally ramified in $k_\infty/k_{n_0}$.
If there exists an integer $n \ge n_0$ such that $|A^{S}(k_{n+1})| = |A^{S}(k_{n})|$, then $A^{S}(k_m) \simeq A^{S}(k_n)$ for all $m \ge n$.

\item[\textup{(2)}] 
Let $n_0 \ge 0$ be an integer such that every prime ideal of $k_{n_0}$ lying above $\Sigma$ is totally ramified in $k_\infty/k_{n_0}$.
If there exists an integer $n \ge n_0$ such that $\mathrm{rank}_l ( A^{S}(k_{n+1}) )=\mathrm{rank}_l ( A^{S}(k_{n}) )$, then $\mathrm{rank}_l ( A^{S}(k_m) ) =\mathrm{rank}_l ( A^{S}(k_n) )$ for all $m \ge n$.

\item[\textup{(3)}] 
Assume that the set $\Sigma$ consists of a unique prime ideal which is totally ramified in $k_\infty/k$.
If the lifting map $\iota_{n} : A^{S}(k) \to A^{S}(k_n)$ is the zero map for some $n \ge 1$, then $A^{S}(k_m) \simeq A^{S}(k_n)$ for all $m \ge n$.
\end{enumerate}
\end{proposition}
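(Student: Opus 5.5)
The plan is to argue entirely through Iwasawa's structure theory as recalled just before the statement. Fix $S\subseteq\Sigma$ and the submodule $Y^{S}\subseteq X^{S}$ with $X^{S}/\nu_{m}Y^{S}\simeq A^{S}(k_m)$ for all $m\ge 0$, where $\nu_m=\omega_m/T$. The existence of such a $Y^{S}$ uses total ramification of $\Sigma$ in $k_\infty/k$. For (1) and (2) I would first replace $k$ by $k_{n_0}$. Every prime above $\Sigma$ of $k_{n_0}$ is then totally ramified in $k_\infty/k_{n_0}$, so the same structure exists over the base $k_{n_0}$ with $\nu_{m,n_0}=\omega_m/\omega_{n_0}$. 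I also use the identity $\nu_{m+1,n_0}=\nu_{m+1,m}\,\nu_{m,n_0}$, where $\nu_{m+1,m}=\omega_{m+1}/\omega_m=\sum_{i=0}^{l-1}(1+T)^{il^m}$. Since $\omega_m\to 0$, we have $\nu_{m+1,m}\in(l,\omega_m)\subseteq(l,T)$, the maximal ideal $\mathfrak m$ of $\Lambda$.

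\textbf{Step for (1).} Suppose $|A^{S}(k_{n+1})|=|A^{S}(k_n)|$. Then $\nu_{n+1,n_0}Y=\nu_{n,n_0}Y$, so $\nu_{n+1,n}Z=Z$ with $Z=\nu_{n,n_0}Y$ a compact finitely generated $\Lambda$-module. Since $\nu_{n+1,n}\in\mathfrak m$, Nakayama's lemma gives $Z=0$. Hence $\nu_{m,n_0}Y=0$ for all $m\ge n$, and $A^{S}(k_m)\simeq X^{S}\simeq A^{S}(k_n)$.

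\textbf{Step for (2).} Pass to $X/lX$ and use the exact sequences $X/(lX+\nu_mY)$. The rank equality says that the images of $\nu_{n,n_0}Y$ and $\nu_{n+1,n_0}Y$ in $X/lX$ coincide. Write $W_m$ for the image of $\nu_{m,n_0}Y$ in $X/lX$. Then $W_{n+1}=\nu_{n+1,n}W_n=W_n$. Since $W_n$ is a finitely generated module over $\Lambda/l=\mathbb F_l[[T]]$ and $\nu_{n+1,n}\equiv T^{l^n(l-1)}\pmod l$ lies in $\mathfrak m$, Nakayama again gives $W_n=0$. Therefore $W_m=0$ for all $m\ge n$, and all the ranks equal $\dim X/lX$.

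\textbf{Step for (3).} With $\Sigma$ a single totally ramified prime, we have $X^{S}/TX^{S}\simeq A^{S}(k)$, i.e.\ $Y^{S}=TX^{S}$. I would verify this from the standard construction in \cite{Iwa73}: with one ramified prime, $Y=TX$ since the inertia group is procyclic. The lifting map $\iota_n$ composed with the norm is multiplication by $\nu_n$. The key point is to identify $\iota_n$ with the map $X/TX\to X/\nu_nTX$ induced by $\nu_n$. This uses that, under capitulation, $\iota_n(x)$ corresponds to $\nu_n\tilde x$ for a lift $\tilde x$; this is Fukuda's argument, valid because $X/\nu_nY\simeq A(k_n)$ is compatible with norms and lifts. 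Then $\iota_n=0$ means $\nu_nX\subseteq\nu_nTX$. By Nakayama (as $T\in\mathfrak m$) this gives $\nu_nX=0$, hence $\nu_mY=0$ for all $m\ge n$, and $A^{S}(k_m)\simeq X^{S}\simeq A^{S}(k_n)$.

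The main obstacle I expect is in (3): precisely justifying that the lifting map corresponds to multiplication by $\nu_n$ on $X^{S}/Y^{S}$, including for the $S$-decomposed version, where classes of primes above $S$ are killed. Parts (1) and (2) are formal once the $Y^{S}$ description is available over $k_{n_0}$.
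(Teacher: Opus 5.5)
Your proposal is correct and follows the paper's proof essentially step by step. For (1) and (2) you change the base field and apply Nakayama to $\omega_{n+1}/\omega_n\in(l,T)$; for (3) you use $Y^{S}=TX^{S}$ and then Nakayama on $(\omega_n/T)X^{S}\subset T(\omega_n/T)X^{S}$. The identification of $\iota_n$ with multiplication by $\omega_n/T$, which you flag as the main obstacle, is simply cited in the paper from Iwasawa's Theorem 8 in \cite{Iwa73}. Your derivation of it, from $\iota_n\circ N_{k_n/k}=\omega_n/T$ on $A^{S}(k_n)$ together with the norm corresponding to the natural surjection $X^{S}/\omega_nX^{S}\to X^{S}/TX^{S}$, is a valid substitute.
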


\begin{proof}
(1) By replacing the base field if necessary, we may assume that $n=n_0=0$. 
If $|A^{S}(k_1)| = |A^{S}(k)|$, then we have $|X^{S}/(\omega_1/T)Y^{S}| = |X^{S}/Y^{S}|$, which implies $Y^{S} = (\omega_1/T)Y^{S}$. 
Since $\omega _1/T \in (l,T)$ and $Y^{S}$ is a finitely generated $\Lambda$-module, Nakayama's lemma implies that $Y^{S}=0$. 
Therefore, $A^{S}(k_m) \simeq X^{S} \simeq A^{S}(k)$ for all $m \ge 0$.

(2) Similarly to (1), we may assume that $n=n_0=0$. 
If $\mathrm{rank}_l (A^{S}(k_1)) = \mathrm{rank}_l (A^{S}(k))$, then we have $|A^{S}(k_1)/lA^{S}(k_1)| = |A^{S}(k)/lA^{S}(k)|$.
This implies 

\noindent $|X^{S}/((\omega_1/T)Y^{S} + lX^{S})| = |X^{S}/(Y^{S}+lX^{S})|$, which yields $Y^{S}+lX^{S} =(\omega_1/T)Y^{S} + lX^{S}$. 
Since $\omega_1/T \in (l, T)$ and $(Y^{S}+lX^{S})/lX^{S}$ is a finitely generated $\Lambda$-module, Nakayama's lemma implies that $Y^{S} \subset lX^{S}$, and hence we have $(\omega_n/T)Y^{S} \subset lX^{S}$.
Thus, $A^{S}(k_m)/lA^{S}(k_m) \simeq X^{S}/lX^{S} \simeq A^{S}(k)/lA^{S}(k)$ for all $m \ge 0$. 
Therefore, $\mathrm{rank}_l (A^{S}(k_m)) = \mathrm{rank}_l (A^{S}(k))$ for all $m \ge 0$.

(3) By assumption, we have $A^{S}(k) \simeq \mathrm{Gal}(L^{S}(k)k_{\infty}/k_{\infty}) \simeq X^{S}/TX^{S}$, so $Y^{S}=TX^{S}$. 
Hence, $A^{S}(k_m) \simeq X^{S}/\omega_m X^{S}$ for all $m \ge 0$. 
If the lifting map $\iota_{n} : A^{S}(k) \to A^{S}(k_n)$ is the zero map, then the homomorphism $X^{S}/TX^{S} \to X^{S}/\omega_n X^{S}$ given by $x' \pmod{TX^{S}} \mapsto (\omega_n /T)x' \pmod{\omega_n X^{S}}$ is also the zero map (\cite[Theorem 8]{Iwa73}). 
This implies that $(\omega_n/T)X^{S} \subset \omega_n X^{S} = T(\omega_n/T)X^{S}$. 
Since $T \in (l,T)$ and $(\omega_n/T)X^{S}$ is a finitely generated $\Lambda$-module, Nakayama's lemma implies that $(\omega_n/T)X^{S}=0$. 
This implies $\omega_m X^{S} =0$ for all $m \ge n$. 
Therefore, $A^{S}(k_m) \simeq X^{S} \simeq A^{S}(k_n)$ for all $m \ge n$.
\end{proof}

From here on, let $k_{\infty}/k$ be the cyclotomic $\mathbb{Z}_l$-extension of a number field $k$.
For each $n \ge 0$, we put $B(k_n) = \{a \in A(k_n) \mid a^\gamma = a\}$, which is a subgroup of $A(k_n)$. 
We briefly recall Leopoldt's conjecture (cf.\ \cite[\S 5.5]{Was}). 
In the case of a totally real number field $k$ and a prime $l$, this conjecture asserts that the cyclotomic $\mathbb Z _l$-extension $k_{\infty}$ is the unique $\mathbb Z _l$-extension of $k$. 

\begin{proposition}[{Greenberg \cite[Proposition 1]{Gre76}}] \label{GT}
Suppose that Leopoldt's conjecture holds for a totally real number field $k$ and a prime number $l$. Then $|B(k_n)|$ is bounded as $n \rightarrow \infty$.
\end{proposition}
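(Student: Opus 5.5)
The plan is to follow Greenberg's original argument, bounding $|B(k_n)|$ through the ambiguous class number formula for the cyclic extension $k_n/k$ of degree $l^n$. The key steps are as follows.

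First, fix $n_0$ such that every prime of $k_{n_0}$ above $l$ is totally ramified in $k_\infty/k_{n_0}$. For $n \ge n_0$, apply Chevalley's ambiguous class number formula to $k_n/k_{n_0}$. This is the cyclic analogue of Proposition~\ref{GF}, and it applies with the same proof. It gives
\[
|A(k_n)^{\mathrm{Gal}(k_n/k_{n_0})}| = \frac{|A(k_{n_0})| \cdot \prod_{v} e_v}{[k_n:k_{n_0}]\,(E(k_{n_0}) : E(k_{n_0}) \cap N_{k_n/k_{n_0}}(k_n^{\times}))},
\]
where $e_v$ is the ramification index of $v$ and only the primes above $l$ ramify. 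The set $B(k_n)$ is contained in this ambiguous group after restricting attention to $\gamma^{l^{n_0}}$. In fact it suffices to bound $B(k_n)$ inside the ambiguous classes of $k_n/k$, since $B(k_n)\subset A(k_n)^{\mathrm{Gal}(k_n/k_{n_0})}$.

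Second, if $s$ is the number of primes of $k_{n_0}$ above $l$, then $\prod e_v = l^{(n-n_0)s}$. The numerator therefore grows like $l^{(n-n_0)s}$, while $[k_n:k_{n_0}] = l^{n-n_0}$. Boundedness thus reduces to showing that the unit norm index $(E(k_{n_0}) : E(k_{n_0})\cap N k_n^\times)$ is at least $c\cdot l^{(n-n_0)(s-1)}$ for some constant $c>0$ independent of $n$.

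Third, and this is the main obstacle, compute that index via Hasse's norm theorem. A unit is a global norm from the cyclic extension $k_n$ if and only if it is a local norm everywhere. At unramified places units are automatically local norms, and the archimedean places pose no issue for $l$ odd. For $l=2$, $k$ is totally real and the real places are unramified in the cyclotomic extension, so they pose no issue either. The condition therefore lives in $\prod_{v\mid l} U_v/(U_v\cap N_v)$. Using the product formula for the norm residue symbols, the image of $E(k_{n_0})$ in this product lies in a hyperplane of "rank" $s-1$ over $\mathbb{Z}/l^{n-n_0}$. Leopoldt's conjecture enters here. It says that the closure of $E(k_{n_0})$ in $\prod_{v\mid l}U_v$ has $\mathbb{Z}_l$-rank $r_1+r_2-1=[k_{n_0}:\mathbb{Q}]-1$, which is equivalent to there being no further $\mathbb{Z}_l$-extensions. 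I would translate this into the statement that the map from $\overline{E}$ to $\prod_{v\mid l}\mathrm{Gal}(k_{\infty,w}/k_{n_0,v})\cong\mathbb{Z}_l^s$ given by the local reciprocity maps has image of finite index in the sum-zero hyperplane $\mathbb{Z}_l^{s-1}$. Concretely, if that image had smaller rank, the cokernel would produce, via global class field theory, an extra independent $\mathbb{Z}_l$-extension of $k_{n_0}$, which would contradict Leopoldt. Reducing modulo $l^{n-n_0}$, the image of the units in $\prod_v (\mathbb{Z}/l^{n-n_0})$ then has order at least $l^{(n-n_0)(s-1)}/C$, with $C$ the fixed index.

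Combining these gives $|B(k_n)| \le |A(k_{n_0})|\cdot C$ for all $n\ge n_0$, which is bounded. For $n<n_0$ there are only finitely many cases, which proves the proposition.
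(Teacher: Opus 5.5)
You have the right framework: Chevalley's formula, Hasse's norm theorem, the product formula, and Leopoldt's conjecture to control the unit norm index. (The paper gives no proof here; it only cites Greenberg.) The gap is in the field to which you apply Leopoldt. After moving the base to $k_{n_0}$, you need the image of $\overline{E(k_{n_0})}$ in $\prod_{w\mid l}\mathrm{Gal}(k_{\infty,w}/k_{n_0,w})\cong\mathbb{Z}_l^{s}$ to have finite index in the sum-zero hyperplane. You justify this by Leopoldt's conjecture for $k_{n_0}$ ("no further $\mathbb{Z}_l$-extensions of $k_{n_0}$"). The hypothesis, however, is Leopoldt for $k$ only, and that is not known to imply Leopoldt for $k_{n_0}$.

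This is not cosmetic. Chevalley's formula is an equality, so boundedness of $|A(k_n)^{\mathrm{Gal}(k_n/k_{n_0})}|$ is \emph{equivalent} to that rank-$(s-1)$ statement at level $k_{n_0}$. Passing from $B(k_n)$ to the larger group $A(k_n)^{\mathrm{Gal}(k_n/k_{n_0})}$ therefore replaces the proposition by a strictly stronger claim. Leopoldt for $k$ only controls the closure of $E(k)$. For $\varepsilon\in E(k)$, the local symbols at all primes $w$ of $k_{n_0}$ above a fixed prime $v$ of $k$ coincide, because $\varepsilon$ is $\mathrm{Gal}(k_{n_0}/k)$-invariant and $k_n/k$ is abelian. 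So $E(k)$ contributes rank at most $t-1$, where $t$ is the number of primes of $k$ above $l$. As soon as some prime of $k$ above $l$ splits in $k_{n_0}/k$, you have $s>t$. The missing rank must then come from the nontrivial $\mathrm{Gal}(k_{n_0}/k)$-character components of the units of $k_{n_0}$, and the hypothesis says nothing about those.

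The fix is to keep $k$ as the base, as Greenberg does, and apply Chevalley's formula to the cyclic extension $k_n/k$ itself.
Let $v_1,\dots,v_t$ be the primes of $k$ above $l$, and let $I_i\subset\Gamma$ be their inertia groups in $k_\infty/k$; these are open, so $I_i\cong\mathbb{Z}_l$.
Then $e_{v_i}(k_n/k)=[I_i:I_i\cap\Gamma^{l^n}]$.
By Hasse's theorem, $E(k)/(E(k)\cap N_{k_n/k}k_n^{\times})$ embeds via local symbols into $\prod_i I_i/(I_i\cap\Gamma^{l^n})$, with image in the kernel of the product map to $\mathrm{Gal}(k_n/k)$.
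Leopoldt for $k$ says that $\overline{E(k)}$ has $\mathbb{Z}_l$-rank $[k:\mathbb{Q}]-1$ in $U=\prod_i U^1_{v_i}$.
The kernel of the Artin map $U\to\Gamma$ contains $\overline{E(k)}$ and has the same rank, because the image is open; hence the index is finite.
Pushing forward along the surjection $U\to\prod_i I_i$, the image of $\overline{E(k)}$ has finite index in $\ker(\prod_i I_i\to\Gamma)\cong\mathbb{Z}_l^{t-1}$.
Your counting argument then gives $|B(k_n)|\le C\,|A(k)|$ for $n$ large. The fact that the $I_i$ generate only a finite-index subgroup of $\Gamma$ changes nothing but the constant.

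As written, your argument is complete only when $n_0=0$, or when Leopoldt is known for $k_{n_0}$. The latter holds for abelian $k$ by Brumer, which covers the way the paper actually uses the proposition.
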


It is well known that Leopoldt's conjecture holds for any abelian extension of $\mathbb{Q}$ (cf.\ \cite[Theorem 2]{Bru67}). 
Consequently, the assumption of Proposition \ref{GT} is satisfied for real quadratic fields. 
Under the assumption that no prime ideal of $k$ lying above $l$ splits in $k_1/k$, we have $D_{\Sigma}(k_n) \subset B(k_n)$, which implies that $|D_{\Sigma}(k_n)|$ is bounded as $n \to \infty$. 
The boundedness of both $|D_{\Sigma}(k_n)|$ and $|A^{\Sigma}(k_n)|$ implies the boundedness of $|A(k_n)|$.
This fact is essential for the proof of Corollary \ref{MT3}.

\section{Criteria}
In this section, we suppose that $l=2$. 
Recall that $\Sigma$ denotes the set of all prime ideals of $k$ which are ramified in $k_\infty/k$.
In the present setting, every prime ideal in $\Sigma$ lies above $2$.
Note that we consider only the cases in which $S= \emptyset$ or $S= \Sigma$. 

We give a criterion for the finiteness of the $S$-decomposed unramified Iwasawa module of a general number field (Theorem \ref{MT2}). 
Our second main theorem is the following.

\begin{theorem}\label{MT2}
Let $k_{\infty}/k$ be a $\mathbb{Z}_2$-extension of a number field $k$ and let $S$ be either $\emptyset$ or $\Sigma$.
Assume that the following four conditions hold:
\begin{itemize}
\item[\textup{(1)}] 
The set $\Sigma$ consists of a unique prime ideal which is totally ramified in $k_\infty/k$;
\item[\textup{(2)}] $A^{S}(k) \simeq \mathbb{Z}/2\mathbb{Z} \oplus \mathbb{Z}/2\mathbb{Z}$;
\item[\textup{(3)}] $A^{S}(k_1) \simeq \mathbb{Z}/4\mathbb{Z} \oplus \mathbb{Z}/2\mathbb{Z}$;
\item[\textup{(4)}] $\mathrm{rank}_4(A^{S}(k_2)) = 1$.
\end{itemize}
Then we have $X^{S}(k_{\infty}) \simeq \mathbb{Z}/4\mathbb{Z} \oplus \mathbb{Z}/2\mathbb{Z}$. 
\end{theorem}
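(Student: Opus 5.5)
The plan is to reduce everything to Proposition~\ref{FT1}(1) with $n_0=0$ and $n=1$: it suffices to show $|A^{S}(k_2)|=|A^{S}(k_1)|=8$. Then $A^{S}(k_m)\simeq A^{S}(k_1)\simeq\mathbb{Z}/4\mathbb{Z}\oplus\mathbb{Z}/2\mathbb{Z}$ for all $m\ge 1$, and $X^{S}=\varprojlim A^{S}(k_m)$ has the same structure.

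First I translate the hypotheses into $\Lambda$-module data. By condition (1) the unique prime in $\Sigma$ is totally ramified, so $L^{S}(k)\cap k_\infty=k$. As in the proof of Proposition~\ref{FT1}(3), this gives $Y^{S}=TX^{S}$ and
\[
A^{S}(k_n)\simeq X^{S}/\omega_nX^{S}
\]
for all $n$. By (2), $X^{S}/TX^{S}\simeq(\mathbb{Z}/2\mathbb{Z})^2$, so $X^{S}$ is generated by two elements over $\Lambda$. Put $M=A^{S}(k_2)=X^{S}/\omega_2X^{S}$; then $M/\omega_1M\simeq A^{S}(k_1)$. By (2), (3) and Proposition~\ref{FT1}(2), $\mathrm{rank}_2(M)=2$, and (4) gives $\mathrm{rank}_4(M)=1$. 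Hence $M\simeq\mathbb{Z}/2^a\mathbb{Z}\oplus\mathbb{Z}/2\mathbb{Z}$ with $a\ge2$.

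Next I bound $a$. Since $M/TM\simeq M/2M$ and $2M\subset TM$ (because $\omega_1 \equiv T^2 \pmod 2$ acts, or directly from $X/TX$ being elementary), we get $TM=2M$, and this group is cyclic. Consequently $T$ preserves each $2^iM$ and raises its $2$-adic level: $T(2^iM)=2^{i+1}M$. From $|M/\omega_1M|=8$ we obtain $\omega_1M=4M$. Writing $\omega_2=\omega_1\cdot(T^2+2T+2)$, and noting that $T^2+2T+2$ acts on the cyclic group $4M$ as $2\cdot(\text{unit})$, we get
\[
0=\omega_2M=8M .
\]
Thus $a\in\{2,3\}$, and only the case $M\simeq\mathbb{Z}/8\mathbb{Z}\oplus\mathbb{Z}/2\mathbb{Z}$ remains to be excluded. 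This case is genuinely consistent at the level of $\Lambda$-modules. For example, with generators $x$ of order $8$ and $y$ of order $2$, take $Tx=2\alpha x$ and $Ty=2\beta x$ with $\alpha$ even and $\beta$ odd. So pure module theory cannot finish the argument, and this is the step I expect to be the main obstacle.

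To exclude $a=3$, I would pass to the nonabelian group
\[
G=\mathrm{Gal}(L^{S}(k_2)/k),
\]
or a suitable quotient such as $\mathrm{Gal}(L^{S}(k_1)/k)$ enlarged by the relevant part of $L^{S}(k_2)$. Since $\Sigma$ consists of one totally ramified prime, a decomposition (inertia) group gives a splitting $G\simeq M\rtimes\mathrm{Gal}(k_2/k)$, where the cyclic group of order $4$ acts on $M$ via $1+T$. Moreover $G^{\mathrm{ab}}\simeq A^{S}(k)\times\mathbb{Z}/4\mathbb{Z}$ or a similar prescribed quotient. The quotient $G/\omega_1M$ must coincide with the group determined by $A^{S}(k_1)$, and $G^{\mathrm{ab}}$ must equal the data from (2) and (1).

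With $a=3$, I would then enumerate the possible actions (the pairs $(\alpha,\beta)$ mod $4$ above) and check the resulting $2$-groups of order $64$. The goal is to show that none of them has the prescribed quotient, for instance that the Frattini quotient or the $4$-rank of a maximal subgroup corresponding to $k_1$ or $k_2$ contradicts (3) or (4). If this is carried out in the general form, I expect it to be exactly the paper's \emph{nonexistence of a $2$-group with a prescribed quotient}. Once $a=3$ is excluded, we have $|M|=8$, and Proposition~\ref{FT1}(1) concludes the proof.
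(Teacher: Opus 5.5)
Your reduction to $|A^{S}(k_2)|=8$ and your module computations up to $TM=2M$, $\omega_1M=4M$ and $a\in\{2,3\}$ are correct. However, the decisive step, ruling out $M\simeq\mathbb{Z}/8\mathbb{Z}\oplus\mathbb{Z}/2\mathbb{Z}$, is never carried out, and the premise behind your plan for it is false.

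Your example is not a $\Lambda$-module. Since $y$ has order $2$, $Ty$ must be killed by $2$, whereas $2\beta x$ with $\beta$ odd has order $4$. This is exactly the configuration the paper's Case~1 excludes: there $\tilde{\delta}h\tilde{\delta}^{-1}=g^{2+4t}h$ would have order $2^{m-1}\ge 4$.

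Passing to $G=\mathrm{Gal}(L^{S}(k_2)/k)$ could not have rescued a module-consistent case either. As you note, $G\simeq M\rtimes I$ with the inertia group $I$ acting through $1+T$. So $G$ carries no information beyond the $\Lambda$-module $M$, and enumerating groups of order $64$ could only rediscover a module-level contradiction.

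That contradiction is already contained in your own computations. Suppose $a\ge 3$. Then $2M$ is cyclic of order at least $4$, and $T(2M)=4M$ has index $2$ in it. Hence $T$ acts on $2M$ as multiplication by $2u$ with $u$ odd, and $T+2$ acts as $2(u+1)\in 4\mathbb{Z}$. Therefore
\[
\omega_1M=(T+2)(TM)=(T+2)(2M)\subset 8M\subsetneq 4M,
\]
which contradicts $\omega_1M=4M$. For $a=3$ this says $\omega_1M=0$, i.e.\ $|A^{S}(k_1)|=16$. So $a=2$, and Proposition~\ref{FT1}(1) finishes the proof.

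With this fix your argument becomes a genuinely different, purely module-theoretic proof. The paper instead proceeds as follows:
\begin{itemize}
\item It works with explicit generators $g,h$ of $X^{S}(k_2)$ and their images $x,y$ in $X^{S}(k_1)$.
\item It first shows that $\mathrm{Gal}(L^{S}(k_1)/k)$ is non-abelian, i.e.\ $TX^{S}(k_1)\neq 0$.
\item It identifies $\mathrm{Gal}(L^{S}(k_1)/L^{S}(k)k_1)=\langle x^2\rangle$, which restricts the possible actions of $\tilde\gamma$.
\item Case~1 ($y\mapsto x^2y$) is settled by the order argument above.
\item Case~2 ($x\mapsto x^{-1}$, $y\mapsto y$) is settled by showing that the lifting map $\iota_1$ vanishes and applying Proposition~\ref{FT1}(3).
\end{itemize}
Your route handles both cases at once, through the single observation that $T$ acts on the cyclic group $2M$ as twice a unit. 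It also does not need part (3) of Proposition~\ref{FT1}.
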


\begin{proof}
First, we show that $G=\mathrm{Gal}(L^{S}(k_1)/k)$ is a non-abelian group. 
By the maximality of $L^{S}(k_1)$, $L^{S}(k_1)/k$ is a Galois extension. 
By condition (3), we have $|G| = [k_1:k]\,[L^{S}(k_1):k_1] = 16$.
Suppose, for contradiction, that $G$ is abelian.
Let $\mathfrak{l}$ be the unique prime ideal in $\Sigma$.
By condition \textup{(1)}, $\mathfrak l$ is totally ramified in $k_1/k$.
Since $L^{S}(k_1)/k_1$ is unramified, $\mathfrak{l}$ is the unique prime ideal ramified in $L^{S}(k_1)/k$.
Let $M$ be the inertia field of $\mathfrak{l}$ in $L^{S}(k_1)/k$. 
Since $\mathfrak{l}$ is totally ramified in $k_1/k$, the inertia group of $\mathfrak{l}$ in $L^{S}(k_1)/k$ has order $2$.
Then $\mathrm{Gal}(L^{S}(k_1)/M)$ has order $2$, and in particular, we have $[M:k]=8$. 
Moreover, since $G$ is assumed to be abelian, $M/k$ is a Galois extension, and in particular, an abelian extension. 
Since $M$ is the inertia field of $\mathfrak{l}$ in $L^{S}(k_1)/k$, $M/k$ is an unramified extension. 
If $S=\Sigma$, then the unique prime ideal of $k_1$ above $\mathfrak{l}$ splits completely in $L^S(k_1)/k_1$.
Hence the decomposition group of $\mathfrak{l}$ in $L^{S}(k_1)/k$ coincides with its inertia group, so that $\mathfrak l$ splits completely in $M/k$.
If $S=\emptyset$, no decomposition condition is imposed.
Thus, in either case, $M/k$ is a $S$-decomposed unramified abelian $2$-extension of $k$. 
Hence $M\subset L^{S}(k)$.
By condition (2), we obtain $[M:k] \leq [L^{S}(k):k] = |A^{S}(k)| = 4$, which contradicts $[M:k]=8$.
Therefore, $G$ is a non-abelian group. 
Next, we determine the structure of $A^{S}(k_2)$.
The intermediate fields and the generators of the Galois group of $L^{S}(k_2)/k$ are important for determining the structure of $A^{S}(k_2)$ (see Figure \ref{fig1}).
Let $X^{S}(k_i)= \mathrm{Gal}(L^{S}(k_i)/k_i)$ for $i\in\{1,2\}$. 
Recall that $X^{S}(k_i) \simeq A^{S}(k_i)$ via the Artin map.
By conditions (2) and (3), we have $\mathrm{rank}_2(A^{S}(k)) = \mathrm{rank}_2(A^{S}(k_1)) = 2$.
Moreover, by condition (1), we may take $n_0=0$ in Proposition \ref{FT1} (2).
Therefore, we obtain $\mathrm{rank}_2(A^{S} (k_2))=2$.
Together with condition (4), this implies that, for some integer $m\geq 2$, 
\[
X^{S}(k_2) = \langle g,h \mid g^{2^m}=h^2=1,\ hgh^{-1}=g \rangle,
\]
with an element $g$ of order $2^m$ and an element $h$ of order $2$. 
Our goal is to prove that $A^{S}(k_2) \simeq \mathbb{Z}/4\mathbb{Z} \oplus \mathbb{Z}/2\mathbb{Z}$, that is, $X^{S}(k_2) \simeq \mathbb{Z}/4\mathbb{Z} \oplus \mathbb{Z}/2\mathbb{Z}$.
The restriction map $ \mathrm{Gal}(L^{S}(k_2)/k_2)\to \mathrm{Gal}(L^{S}(k_1)/k_1) $ induces a surjective homomorphism $\pi : X^{S}(k_2) \twoheadrightarrow X^{S}(k_1)$.
By condition (3), $|A^{S}(k_1)|=8$ , and hence we have $|\ker \pi | = \frac{|X^{S}(k_2)|}{|X^{S}(k_1)|} = 2^{m-2}$. 
Note that $4X^{S}(k_2)= \langle g^4 \rangle$ has order $2^{m-2}$.
Since $4X^{S}(k_1) \simeq 0$, we see that $4X^{S}(k_2) \subset \ker \pi$.
Comparing their orders, we have $\ker \pi = \langle g^4 \rangle$.
Now, we define elements $x, y \in X^{S}(k_1)$ by $x = \pi(g)$ and $y = \pi(h)$, respectively.
Since $x^4 = 1$, we see that the order of $x$ is either $1,2,$ or $4$. 
If $x^2=1$, then $g^2 \in \ker \pi = \langle g^4\rangle $, which is a contradiction. 
Hence, the order of $x$ is $4$.
Similarly, since $y^2 = 1$, we see that the order of $y$ is either $1$ or $2$.
If $y=1$, then $h \in \ker \pi = \langle g^4\rangle \subset \langle g\rangle$, which contradicts $h \notin \langle g \rangle$. 
Hence, the order of $y$ is $2$. 
Thus, we have 
\[
X^{S}(k_1) = \langle x, y \mid x^4=y^2=1, yxy^{-1}=x \rangle.
\]
We now determine the action of $\mathrm{Gal}(k_1/k)$ on $X^{S}(k_1)$. 
Since $L^{S}(k)/k$ is an unramified extension and $\mathfrak l$ is totally ramified in $k_1/k$, we have $L^{S}(k)\cap k_1=k$.
Moreover, $ L^{S}(k)k_1 \subset L^{S}(k_1)$.
By condition \textup{(2)}, $[L^{S}(k)k_1:k_1] =[L^{S}(k):k]=4$.
Put $H=\mathrm{Gal}( L^{S}(k_1)/L^{S}(k)k_1)$.
Then $H$ has order $2$, and $X^{S}(k_1)/H \simeq \mathrm{Gal}(L^{S}(k)k_1/k_1) \simeq A^{S}(k) \simeq \mathbb{Z}/2\mathbb{Z} \oplus \mathbb{Z}/2\mathbb{Z}$.
Among the subgroups of order $2$ of $X^{S}(k_1) \simeq \mathbb{Z}/4\mathbb{Z} \oplus \mathbb{Z}/2\mathbb{Z}$, the subgroup $\langle x^2\rangle$ is the unique one whose quotient is elementary abelian. 
Therefore, $H=\langle x^2\rangle $.
Since both $L^{S}(k)/k$ and $k_1/k$ are abelian and $L^{S}(k)\cap k_1=k$, the extension $L^{S}(k)k_1/k$ is abelian. 
Hence $G/H \simeq \mathrm{Gal}(L^{S}(k)k_1/k)$ is abelian.
By the maximality of $L^{S}(k_2)$, $L^{S}(k_2)/k$ is a Galois extension. 
Let $\mathrm{Gal}(k_2/k)=\langle\delta\rangle$, and let $\gamma$ be the restriction of $\delta$ to $k_1$, and then $\mathrm{Gal}(k_1/k)=\langle\gamma\rangle$.
Let $\tilde{\delta}$ be the extension of $\delta$ to $L^{S}(k_2)$ which generates the inertia group of a prime lying above $\mathfrak{l}$ in $L^{S}(k_2)/k$ and let $\tilde{\gamma}$ be the restriction of $\tilde{\delta}$ to $L^{S}(k_1)$. 
Since $G/\langle x^2\rangle$ is abelian, we have $\tilde{\gamma} x \tilde{\gamma}^{-1} \in \{x,x^{-1}\}$ and $\tilde{\gamma}y\tilde{\gamma}^{-1} \in \{y,x^2y\}$.
We consider two cases, depending on the values of $\tilde{\gamma} x \tilde{\gamma}^{-1}$ and $\tilde{\gamma} y \tilde{\gamma}^{-1}$.
Since $G$ is non-abelian, the case simultaneously $\tilde{\gamma}x\tilde{\gamma}^{-1}=x$ and $\tilde{\gamma} y \tilde{\gamma}^{-1}=y$ does not occur.

\noindent \textbf{Case 1:} 
We suppose that $\tilde{\gamma} y \tilde{\gamma}^{-1} = x^2y$. 
Suppose, for contradiction, that $m\geq 3$.
By the definition of $x$ and $y$, we have
\[
\pi (\tilde{\delta} h \tilde{\delta}^{-1})  = \tilde{\gamma} \pi(h) \tilde{\gamma}^{-1} = \tilde{\gamma} y \tilde{\gamma}^{-1} = x^2y = \pi (g^2h).
\]
This implies that $\tilde{\delta} h \tilde{\delta}^{-1} (g^2h)^{-1} \in \ker\pi = \langle g^4\rangle$. 
Thus, there exists an integer $t\in \mathbb{Z}$ such that $\tilde{\delta} h \tilde{\delta}^{-1} = g^{2+4t}h$. 
Since the order of $g$ is $2^m$, the order of $g^{2+4t} =g^{2(1+2t)}$ is $2^{m-1}$. 
Thus, the order of $g^{2+4t}h$ is $2^{m-1} \ge 2^{3-1}=4$. 
Since $h$ is an element of order $2$, $\tilde{\delta} h \tilde{\delta}^{-1}$ is also an element of order $2$.
This is a contradiction. 
Hence, we have $m=2$.
Therefore $X^{S}(k_2)\simeq  \mathbb{Z}/4\mathbb{Z} \oplus \mathbb{Z}/2\mathbb{Z}$, which means $A^{S}(k_2)\simeq A^{S}(k_1)$.
Therefore, by Proposition \ref{FT1} (1), $A^{S}(k_n) \simeq \mathbb{Z}/4\mathbb{Z} \oplus \mathbb{Z}/2\mathbb{Z}$ for all $n\ge1$, and hence $X^{S}(k_{\infty}) \simeq \mathbb{Z}/4\mathbb{Z} \oplus \mathbb{Z}/2\mathbb{Z} $. 

\noindent \textbf{Case 2:} 
We suppose  that $\tilde{\gamma} x \tilde{\gamma}^{-1} = x^{-1}$ and $\tilde{\gamma} y \tilde{\gamma}^{-1} = y$. 
For any element $a \in X^{S}(k_1)$, we can write $a = x^ry^s$ with $r,s \in \mathbb{Z}$. 
Since $X^{S}(k_1)$ is abelian, we have $a (\tilde{\gamma} a \tilde{\gamma}^{-1}) = (x^ry^s)(x^{-r}y^s) = y^{2s} = 1$, which implies that $A^{S}(k_1)^{1+\gamma} \simeq 0$.
Note that $A^{S}(k_1)^{1+\gamma} = \iota_{1} ( N_{k_1/k} (A^{S}(k_1)))$.
The norm map $N_{k_1/k}: A(k_1) \to A(k)$ induces a surjective homomorphism  $A^{S}(k_1) \twoheadrightarrow A^{S}(k)$.
Hence, the lifting map $\iota_{1}: A^{S}(k) \to A^{S}(k_1)$ is the zero map. 
By Proposition \ref{FT1} (3), $A^{S}(k_n)\simeq \mathbb{Z}/4\mathbb{Z} \oplus \mathbb{Z}/2\mathbb{Z} $ for all $n \ge 1$, and hence $X^{S}(k_{\infty}) \simeq \mathbb{Z}/4\mathbb{Z} \oplus \mathbb{Z}/2\mathbb{Z} $. 

\noindent This completes the proof. 
\end{proof}

\begin{figure}[htbp] 
\centering
$\entrymodifiers={+!!<0pt,\fontdimen22\textfont2>}
\xymatrix@=1\baselineskip{
& \mbox{$\,\,\cdot\,\,$} \ar@{-}[rrr] \ar@{-}[dd]|\hole & & & \cdot \ar@{-}[rrr] \ar@{-}[dd]|\hole & & & \cdot \ar@{--}[rrrr] \ar@{-}[dd]|\hole & & & & L^{S}(k_2) \ar@{-}[dddd] \ar@/^1.5pc/@{.}[dddd]^{\mbox{$\tilde{\delta} $}} \ar@/_1.8pc/@{.}[llll]_{\mbox{$ g^4 $}} \ar@/_1.2pc/@{.}[ld]_(0.75){\mbox{$ h $}} \ar@/_1.6pc/@{.}[llllllllll]_{\mbox{$ g $}} &&&&\\
k_2 \ar@{-}[rrr] \ar@{-}[ur] \ar@{-}[dd]  \ar@/_1.3pc/@{.}[dddd]_(0.37){\mbox{$ \delta$}}  & & & \cdot \ar@{-}[rrr] \ar@{-}[ur] \ar@{-}[dd] & & & \cdot \ar@{--}[rrrr] \ar@{-}[ur] \ar@{-}[dd] & & & & \cdot \ar@{-}[ur] \ar@{-}[dddd] \\
& \mbox{$\,\,\cdot\,\,$} \ar@{-}[rrr]|(0.54)\hole \ar@{-}[dd]|(0.5)\hole \ar@/^1.3pc/@{.}[rrrrrr]^(0.6){\mbox{$x$}}  & & & L^{S}(k)k_1 \ar@{-}[rrr]|(0.62)\hole  \ar@{-}[dd]|(0.5)\hole   & & & L^{S}(k_1)\ar@{-}[dd] \ar@/^1.3pc/@{.}[dd]^{\mbox{$ \tilde{\gamma} $}} \ar@/^1.1pc/@{.}[ld]^(0.65){\mbox{$ y $}} \ar@/^0.9pc/@{.}[lll]^(0.6){\mbox{$x^2$}}& & & &  \\
k_1 \ar@{-}[rrr]   \ar@{-}[ur]     \ar@{-}[dd] \ar@/_1.6pc/@{.}[dd]_(0.5){\mbox{$ \gamma $}}& & & \cdot \ar@{-}[rrr] \ar@{-}[ur] \ar@{-}[dd]  & & & \cdot  \ar@{-}[ur] \ar@{-}[dd] & & & & \\
& \mbox{$\,\,\cdot\,\,$} \ar@{-}[rrr]|(0.54)\hole & & & L^{S}(k) \ar@{-}[rrr]|(0.62)\hole & & & \cdot \ar@{--}[rrrr]|(0.75)\hole & & & & \cdot \\
k   \ar@{-}[rrr] \ar@{-}[ur]  & & & \cdot \ar@{-}[rrr] \ar@{-}[ur] & & & \cdot \ar@{--}[rrrr] \ar@{-}[ur] & & & & \cdot \ar@{-}[ur]
}$
\caption{The intermediate fields and the Galois groups of $L^{S}(k_2)/k$}
\label{fig1}
\end{figure}

In the setting of the following corollary, every prime ideal of $k$ lying above $2$ is totally ramified in the cyclotomic $\mathbb{Z}_2$-extension $k_\infty/k$. 
We take $S=\Sigma$, so that $S$ consists of all prime ideals of $k$ lying above $2$.
The following corollary of Theorem \ref{MT2} plays an important role in the proof of Theorem \ref{MT1}.

\begin{corollary}\label{MT3}
Let $k=\mathbb{Q}(\sqrt{m})$, where $m>1$ is a positive odd square-free integer, and let
$k_{\infty}/k$ be the cyclotomic $\mathbb{Z}_2$-extension of $k$.
Assume that the following four conditions hold:
\begin{itemize}
\item[\textup{(1)}] $2$ does not split in $k/\mathbb{Q}$;
\item[\textup{(2)}] $A^{\Sigma}(k) \simeq \mathbb{Z}/2\mathbb{Z} \oplus \mathbb{Z}/2\mathbb{Z}$;
\item[\textup{(3)}] $A^{\Sigma}(k_1) \simeq \mathbb{Z}/4\mathbb{Z} \oplus \mathbb{Z}/2\mathbb{Z}$;
\item[\textup{(4)}] $\mathrm{rank}_4(A^{\Sigma}(k_2)) = 1$.
\end{itemize}
Then $X^{\Sigma}(k_{\infty}) \simeq \mathbb{Z}/4\mathbb{Z} \oplus \mathbb{Z}/2\mathbb{Z}$. 
In particular, $\lambda_2(k)=\mu_2(k)=0$.
\end{corollary}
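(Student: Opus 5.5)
The plan is to deduce the corollary from Theorem \ref{MT2} with $S=\Sigma$, so the main task is to verify condition (1) of that theorem for $k=\mathbb{Q}(\sqrt{m})$. The remaining conditions (2)--(4) of Theorem \ref{MT2} coincide with conditions (2)--(4) of the corollary. Finiteness of $X(k_\infty)$ then follows from the boundedness argument recalled after Proposition \ref{GT}.

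First I check that $\Sigma$ is a single prime totally ramified in $k_\infty/k$. Since $m$ is odd, $2$ is unramified in $k/\mathbb{Q}$. By condition (1), $2$ does not split, so $2$ is inert in $k$. Hence there is a unique prime $\mathfrak{l}=(2)$ of $k$ above $2$. Since $2$ is the only prime ramified in $\mathbb{Q}_\infty/\mathbb{Q}$, we have $\Sigma\subset\{\mathfrak{l}\}$. The prime $2$ is totally ramified in $\mathbb{Q}_\infty/\mathbb{Q}$. Its ramification index in $k_n/\mathbb{Q}$ is therefore at least $2^n$, while the ramification index in $k/\mathbb{Q}$ is $1$. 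It follows that $\mathfrak{l}$ is totally ramified in each $k_n/k$, and hence in $k_\infty/k$. Theorem \ref{MT2} with $S=\Sigma$ now gives $X^{\Sigma}(k_\infty)\simeq\mathbb{Z}/4\mathbb{Z}\oplus\mathbb{Z}/2\mathbb{Z}$. In particular $|A^{\Sigma}(k_n)|=8$ for all $n\ge1$, by the proof of Theorem \ref{MT2}, or because $X^{\Sigma}/(\omega_n/T)Y^{\Sigma}$ is a quotient of a finite module.

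Next I pass from $A^{\Sigma}(k_n)$ to $A(k_n)$ using the exact sequence $0\to D_\Sigma(k_n)\to A(k_n)\to A^{\Sigma}(k_n)\to 0$. The prime of $k_n$ above $\mathfrak{l}$ is unique and totally ramified over $k$, so it is fixed by $\gamma$, and its class lies in $B(k_n)$. Thus $D_\Sigma(k_n)\subset B(k_n)$. Leopoldt's conjecture holds for the abelian field $k$ by \cite[Theorem 2]{Bru67}, so Proposition \ref{GT} shows that $|B(k_n)|$ is bounded. Consequently $|A(k_n)|\le 8\,|B(k_n)|$ is bounded. From Iwasawa's formula $|A(k_n)|=2^{\lambda n+\mu 2^n+\nu}$ we conclude $\lambda_2(k)=\mu_2(k)=0$.

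The main obstacle is the verification of condition (1) of Theorem \ref{MT2}: we must check that inertness of $2$ really forces a single prime that is totally ramified in the whole tower, not just in $k_1/k$. This is settled by the multiplicativity of ramification indices above. The rest is direct bookkeeping with the earlier results.
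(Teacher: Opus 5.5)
Your overall strategy is the paper's: apply Theorem \ref{MT2} with $S=\Sigma$, then use $D_\Sigma(k_n)\subset B(k_n)$ together with Proposition \ref{GT} and Brumer's theorem. The second half of your argument is fine. The gap is in the verification of condition (1) of Theorem \ref{MT2}, which you correctly single out as the crux.

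The assertion ``since $m$ is odd, $2$ is unramified in $k/\mathbb{Q}$'' is false. For odd square-free $m\equiv 3\pmod 4$ the discriminant of $\mathbb{Q}(\sqrt{m})$ is $4m$. So $2$ ramifies, $(2)=\mathfrak{l}^2$ rather than $\mathfrak{l}=(2)$, and condition (1) of the corollary still holds. This is not an edge case: it is exactly the situation of Theorem \ref{MT1} ($m\equiv 3\pmod 8$) and of the numerical examples. The Remark after the corollary treats the inert and ramified cases separately for this reason. In the ramified case your multiplicativity argument breaks down, because it used $e(k/\mathbb{Q})=1$. From $e(k_n/\mathbb{Q})\ge 2^n$ and $e(k/\mathbb{Q})=2$ you only get $e(k_n/k)\ge 2^{n-1}$. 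As written, your proof covers only $m\equiv 5\pmod 8$.

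The repair is short. First show that $\mathfrak{l}$ ramifies in $k_1/k$:
\begin{itemize}
\item If $m\equiv 3\pmod 4$, the three quadratic subfields $\mathbb{Q}(\sqrt{m})$, $\mathbb{Q}(\sqrt{2})$, $\mathbb{Q}(\sqrt{2m})$ of $k_1$ all ramify at $2$ (discriminants $4m$, $8$, $8m$).
\item So the inertia group of $2$ in $\mathrm{Gal}(k_1/\mathbb{Q})\simeq(\mathbb{Z}/2\mathbb{Z})^2$ cannot have order $\le 2$, since its fixed field would then contain a quadratic subfield unramified at $2$.
\item Hence $e(k_1/\mathbb{Q})=4$, and $\mathfrak{l}$ ramifies in $k_1/k$.
\end{itemize}
Then use procyclicity instead of multiplicativity. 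The inertia group of $\mathfrak{l}$ in $\Gamma=\mathrm{Gal}(k_\infty/k)\simeq\mathbb{Z}_2$ is a closed subgroup surjecting onto $\mathrm{Gal}(k_1/k)$. Every proper closed subgroup of $\mathbb{Z}_2$ lies in $2\mathbb{Z}_2$, so the inertia group is all of $\Gamma$, and $\mathfrak{l}$ is totally ramified in $k_\infty/k$.

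This argument also covers the inert case, where $\mathfrak{l}$ ramifies in $k_1/k$ because $e(k_1/\mathbb{Q})\ge 2$. It also shows where oddness of $m$ is actually needed: for $m=2m'$ with $m'\equiv 1\pmod 4$, $k_1/k$ is unramified at $2$. Once condition (1) of Theorem \ref{MT2} is established this way, the rest of your proof goes through unchanged.
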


\begin{proof}
Since $m>1$ is odd and $2$ does not split in $k/\mathbb{Q}$, there is
a unique prime ideal of $k$ lying above $2$, and it is totally ramified
in the cyclotomic $\mathbb{Z}_{2}$-extension $k_\infty/k$.
By Theorem \ref{MT2}, we have $X^{\Sigma}(k_{\infty}) \simeq \mathbb{Z}/4\mathbb{Z} \oplus \mathbb{Z}/2\mathbb{Z}$.
Furthermore, by Proposition \ref{GT}, $|D_{\Sigma}(k_n)|$ is bounded as $n\rightarrow \infty$.
Therefore, $|A(k_n)|$ is bounded as $n \to \infty$.
In particular, we have $\lambda_2(k)=\mu_2(k)=0$.
This completes the proof. 
\end{proof}

\begin{remark}
In Corollary \ref{MT3}, since $|D_\Sigma(k_n)|\le 2$ for each $n\ge0$, we have $|X(k_{\infty})|\le 16$. 
Condition (1) in Corollary \ref{MT3} ensures this.
In fact, by condition (1), we consider two cases depending on whether $2$ is inert or ramified in $k/\mathbb{Q}$.
Note that $A(\mathbb{Q}_n) \simeq 0$ for all $n \ge 0$ (cf.\ \cite{Web86}).
If $2$ is inert in $k/\mathbb{Q}$, the prime ideal of $k_n$ lying above $2$ is a principal ideal, and hence $|D_\Sigma(k_n)|= 1$.
If $2$ is ramified in $k/\mathbb{Q}$, the order of the class of the prime ideal of $k_n$ lying above $2$ is at most $2$, and hence $|D_\Sigma(k_n)|\le 2$.
We see that $|A(k_n)|=|D_{\Sigma}(k_n)|\cdot|A^{\Sigma}(k_n)|\le 2\cdot 8 =16$ for each $n\ge0$.
Therefore, we have $|X(k_{\infty})|\le 16$.
\end{remark}

\begin{example}
By using \texttt{PARI/GP} \cite{PARI}, we obtain the following examples.

1.\ \ $k=\mathbb{Q}(\sqrt{ 3\cdot 5\cdot 29 })$: \ 
We have  $3 \cdot 5 \cdot 29 \equiv 3 \pmod 8$, \ $\mathrm{rank}_4(A(k_2))=1$, 
\[
A(k) \simeq \mathbb{Z}/2\mathbb{Z} \oplus \mathbb{Z}/2\mathbb{Z},  \quad 
A(k_1) \simeq \mathbb{Z}/4\mathbb{Z} \oplus \mathbb{Z}/2\mathbb{Z}.
\]
By Theorem \ref{MT2}, $X(k_\infty)\simeq \mathbb{Z}/4\mathbb{Z} \oplus \mathbb{Z}/2\mathbb{Z}$.\\

2.\ \ $k=\mathbb{Q}(\sqrt{ 3\cdot 5\cdot 11\cdot 19 })$: \ 
We have  $3 \cdot 5 \cdot 11 \cdot 19 \equiv 7 \pmod 8$, \ $\mathrm{rank}_4(A^{\Sigma}(k_2))=1$, 
\begin{align*}
A(k) \simeq \mathbb{Z}/2\mathbb{Z} \oplus \mathbb{Z}/2\mathbb{Z} \oplus \mathbb{Z}/2\mathbb{Z},  \quad  &D_{\Sigma}(k)\simeq \mathbb{Z}/2\mathbb{Z}, \quad   A^{\Sigma}(k) \simeq \mathbb{Z}/2\mathbb{Z} \oplus \mathbb{Z}/2\mathbb{Z},\\
A(k_1) \simeq \mathbb{Z}/4\mathbb{Z} \oplus \mathbb{Z}/2\mathbb{Z} \oplus \mathbb{Z}/2\mathbb{Z},  \quad  &D_{\Sigma}(k_1)\simeq \mathbb{Z}/2\mathbb{Z},   \quad   A^{\Sigma}(k_1) \simeq \mathbb{Z}/4\mathbb{Z} \oplus \mathbb{Z}/2\mathbb{Z}.
\end{align*}
By Corollary \ref{MT3}, $X^{\Sigma}(k_\infty)\simeq \mathbb{Z}/4\mathbb{Z} \oplus \mathbb{Z}/2\mathbb{Z}$.\\

3.\ \  $k=\mathbb{Q}(\sqrt{ 3\cdot 11\cdot 5\cdot 31 })$: \ 
We see that the prime numbers $(p_1, p_2, p_3, p_4) = (3, 11, 5, 31)$ satisfy the conditions of Theorem \ref{MT1}. 
Therefore, by Theorem \ref{MT1}, we have 
\[
X(k_\infty) \simeq \mathbb{Z}/4\mathbb{Z} \oplus \mathbb{Z}/2\mathbb{Z} \oplus \mathbb{Z}/2\mathbb{Z}, \quad A(k) \simeq \mathbb{Z}/2\mathbb{Z} \oplus \mathbb{Z}/2\mathbb{Z} \oplus \mathbb{Z}/2\mathbb{Z}.
\]
\end{example}

\section{Proof of Theorem \ref{MT1}}

In this section, we suppose that $l=2$. 
We show that the real quadratic field $k$ defined in Theorem \ref{MT1} satisfies the four conditions of Corollary \ref{MT3}.

We introduce some notation.
Let $p_1, p_2, p_3$, and $p_4$ be the prime numbers satisfying the conditions given in Theorem \ref{MT1}.
We set $m = p_1p_2p_3p_4$, $k = \mathbb{Q}(\sqrt{m})$, and $k^{\vee}= \mathbb{Q}(\sqrt{2m})$. 
For a square-free integer $d>1$, let $\varepsilon_d$ be the fundamental unit of $\mathbb{Q}(\sqrt{d})$.
For any number field $F$, let $\mathcal{O}_F$ be its ring of integers. 
We denote by $\mathfrak{l}_{F}$ a prime ideal of $F$ lying above $2$, and by $\mathfrak{p}_{i,F}$ a prime ideal of $F$ lying above $p_i$ for each $i \in \{1,2,3,4\}$. 
For an ideal $\mathfrak{a}$ of $F$, we denote by $[\mathfrak{a}]$ the ideal class containing $\mathfrak{a}$.
For a finite abelian group $A$, we put $A[2] = \{ a\in A \mid a^2=1 \}$.
For an unramified abelian extension $K/F$ of number fields and a prime ideal $\mathfrak{p}$ of $F$, we denote by $\left(\frac{K/F}{\mathfrak{p}}\right)$ the Artin symbol. 
Also, for an element $x \in \mathcal{O}_F$ prime to $\mathfrak{p}$, we denote by $\left(\frac{x}{\mathfrak{p}}\right)$ the quadratic residue symbol.
Finally, for a prime ideal $\mathfrak{p}$ of $F$ and elements $x,y \in F^{\times}$, we denote by $(x,y)_\mathfrak{p}$ the Hilbert symbol at $\mathfrak{p}$.

Since $m\equiv 3 \pmod 8$, $2$ is ramified in $k/\mathbb{Q}$, and hence $|\Sigma|=1$.
This means that  $k = \mathbb{Q}(\sqrt{m})$ satisfies condition (1) in Corollary \ref{MT3}.

\begin{lemma}\label{4_1}
$A(k)\simeq \mathbb{Z}/2\mathbb{Z} \oplus \mathbb{Z}/2\mathbb{Z} \oplus \mathbb{Z}/2\mathbb{Z}$ and  $A^{\Sigma}(k)\simeq \mathbb{Z}/2\mathbb{Z} \oplus \mathbb{Z}/2\mathbb{Z}$.
\end{lemma}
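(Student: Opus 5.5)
The plan is to compute $A(k)/2A(k)$ with the genus formula (Proposition \ref{GF}), show that $A(k)$ has no elements of order $4$ using R\'edei--Reichardt (Proposition \ref{RRT}), and then show that $D_{\Sigma}(k)$ has order exactly $2$.

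\emph{The $2$-rank.} Apply Proposition \ref{GF} to $K=k$, $F=\mathbb{Q}$. Since $m\equiv 3\pmod 8$, the discriminant is $D_k=4m$, and the ramified primes are $2,p_1,p_2,p_3,p_4$, so $t(k/\mathbb{Q})=5$; the infinite prime is unramified because $k$ is real. Since $p_1\equiv 3\pmod 4$ divides $m$, $-1$ is not a local norm at $p_1$. Hence $(E(\mathbb{Q}):E(\mathbb{Q})\cap N_{k/\mathbb{Q}}(k^\times))=2$. By the remark after Proposition \ref{GF}, $|A(k)/2A(k)|=2^{4}/2=8$, so $\mathrm{rank}_2 A(k)=3$.

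\emph{No elements of order $4$.} Write
\[
D_k=(-4)(-p_1)(-p_2)(p_3)(-p_4).
\]
The narrow group $A^+(k)$ has $2$-rank $4$ and $|A^+(k)|=2|A(k)|$, since $N(\varepsilon_m)=1$. I would run through the $15$ nontrivial splittings $\{D_1,D_2\}$ in $S_1(k)$ and test the R\'edei conditions. The inputs are the prescribed Legendre symbols, quadratic reciprocity, and the congruences $p_1,p_2\equiv 3$, $p_3\equiv 5 \pmod 8$ and $p_4\equiv 15\pmod{16}$; the last gives $(2/p_4)=1$ and $p_4\equiv -1$ modulo $8$ and $16$. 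The symbols not prescribed in the statement (for instance $(p_3/p_4)$ and $(p_2/p_4)$) have to drop out, since each splitting should already fail on a prescribed symbol or on a residue at $2$.

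I expect $\mathrm{rank}_4 A^+(k)\le 1$. If it is $0$, then $A^+(k)$ is elementary of order $16$, so its quotient $A(k)$ is elementary of order $8$. If a surviving splitting exists, I would instead use the refined R\'edei criterion for the ordinary class group, which allows only splittings with $D_1,D_2>0$. Alternatively, I would show that the unique cyclic factor of order $\geq 4$ in $A^+(k)$ is cut by the kernel of $A^+(k)\to A(k)$, i.e.\ by the class of $(\sqrt m)$. In either case the conclusion $\mathrm{rank}_4 A(k)=0$ gives
\[
A(k)\simeq(\mathbb{Z}/2\mathbb{Z})^3.
\]
This casework over splittings, and correctly separating the narrow and ordinary $4$-ranks, is the step I expect to be the main obstacle.

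\emph{The quotient $A^{\Sigma}(k)$.} Let $\mathfrak{l}_k$ be the prime above $2$. Then $\mathfrak{l}_k^2=(2)$, so $D_{\Sigma}(k)=\langle[\mathfrak{l}_k]\rangle$ has order at most $2$. The prime $\mathfrak{l}_k$ is principal exactly when $x^2-my^2=\pm2$ is solvable, which requires $\pm2$ to be a local norm at every $p_i$. Since $p_1\equiv 3\pmod 8$, we have $(2/p_1)=-1$, so $2$ is not a norm. Since $p_3\equiv 5\pmod 8$, we have $(-2/p_3)=(-1/p_3)(2/p_3)=-1$, so $-2$ is not a norm. Hence $[\mathfrak{l}_k]$ has order $2$. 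Then $A^{\Sigma}(k)=A(k)/D_{\Sigma}(k)$ is a quotient of $(\mathbb{Z}/2\mathbb{Z})^3$ of order $4$, so
\[
A^{\Sigma}(k)\simeq\mathbb{Z}/2\mathbb{Z}\oplus\mathbb{Z}/2\mathbb{Z}.
\]
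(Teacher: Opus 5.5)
Your plan follows the paper's proof: genus theory for the $2$-rank, R\'edei--Reichardt (Proposition~\ref{RRT}) for the $4$-rank, and nontriviality of $[\mathfrak{l}_k]$. Two of your steps differ from the paper's and are correct. Proposition~\ref{GF}, with $t(k/\mathbb{Q})=5$ and $-1$ not a local norm at $p_1$, gives $\mathrm{rank}_2(A(k))=3$ as well as the paper's genus field $k(\sqrt{p_1},\sqrt{p_2},\sqrt{p_3})$ does. The local insolvability of $x^2-my^2=\pm 2$ is a valid substitute for the paper's observation that $\mathfrak{l}_k$ is inert in the unramified extension $k(\sqrt{p_3})/k$. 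The gap is the $4$-rank, the only step with real content. You never carry out the R\'edei test; you only \emph{expect} $\mathrm{rank}_4(A^+(k))\le 1$ and offer fallbacks in case a nontrivial splitting survives.

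Those fallbacks cannot work. You have $\mathrm{rank}_2(A^+(k))=4$ and $\mathrm{rank}_2(A(k))=3$. Also $A(k)=A^+(k)/\langle r\rangle$, where $r$ is the narrow class of $(\sqrt m)$, of order $2$ since $N(\varepsilon_m)=1$. The drop in $2$-rank means $r\notin 2A^+(k)$. Hence $2A^+(k)\cap(4A^+(k)+\langle r\rangle)=4A^+(k)$, and so $2A(k)/4A(k)\simeq 2A^+(k)/4A^+(k)$. Thus $\mathrm{rank}_4(A(k))=\mathrm{rank}_4(A^+(k))$. A surviving splitting would make the lemma false, and no cyclic factor of order $\ge 4$ can be ``cut'' by $r$. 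Restricting to $D_1,D_2>0$ changes nothing either. A splitting with $D_1,D_2<0$ never lies in $S_2(k)$: the R\'edei conditions force $(D_1,D_2)_v=1$ at every finite $v$, while $(D_1,D_2)_\infty=-1$, contradicting reciprocity. So the first assertion of the lemma is \emph{equivalent} to $S_2(k)=\{\{1,D_k\}\}$, and this must actually be verified.

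It does hold, and, as you guessed, without using $\left(\frac{p_2}{p_4}\right)$ or $\left(\frac{p_3}{p_4}\right)$. By reciprocity, $\left(\frac{p_2}{p_1}\right)=\left(\frac{p_3}{p_1}\right)=-1$ and $\left(\frac{p_4}{p_1}\right)=\left(\frac{p_3}{p_2}\right)=1$.

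Seven splittings have a factor $-p_1$, $-p_2$, $p_3$, $p_1p_2p_3$, $p_1p_4$, $p_2p_4$ or $-p_3p_4$. Each of these is $\equiv 5\pmod 8$, so its symbol at $2$ is $-1$.

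For the other eight, $\chi_{-4}(p_1)$, $\chi_{-p_4}(p_1)$, $\chi_{4p_2}(p_1)$, $\chi_{-p_2p_3}(p_1)$, $\chi_{-4p_3}(p_2)$, $\chi_{-p_1p_3}(p_2)$, $\chi_{4p_1}(p_3)$ and $\chi_{p_1p_2}(p_3)$ all equal $-1$. These are exactly the obstructions in Table~\ref{tab1}.

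With this, $A^+(k)\simeq(\mathbb{Z}/2\mathbb{Z})^4$, your first branch applies, and your last paragraph completes the proof.
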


\begin{proof}
Since the narrow genus field of $k$ is $\mathbb{Q}(\sqrt{-1}, \sqrt{-p_1},\sqrt{-p_2},\sqrt{p_3},\sqrt{-p_4})$, we see that the genus field $k_G=k(\sqrt{p_1},\sqrt{p_2},\sqrt{p_3})$ is the maximal unramified elementary abelian $2$-extension of $k$, which is a subextension of $L(k)/k$.
By the Artin map, $\mathrm{Gal}(k_G/k) \simeq A(k)/2A(k) $.
This implies $\mathrm{rank}_2(A(k))=3$.
The discriminant of $k$ is $D_k= 4p_1p_2p_3p_4$.
The elements of $S_1(k)$ are listed in Table \ref{tab1}. 
For each element not belonging to $S_2(k)$, the table also gives a Kronecker symbol whose value is $-1$.
Therefore, we have $S_2(k)= \{\{1,D_{k} \}\}$.
By Proposition \ref{RRT}, $\mathrm{rank}_4(A^+(k))=0$, and thus $A(k)\simeq \mathbb{Z}/2\mathbb{Z} \oplus \mathbb{Z}/2\mathbb{Z} \oplus \mathbb{Z}/2\mathbb{Z}$.
Furthermore, since $\mathfrak{l}_k ^2 = 2\mathcal{O}_k$, the order of $[\mathfrak{l}_k]$ is at most two.
Since $\mathfrak{l}_k$ is inert in $k(\sqrt{p_3})/k$, we have $[\mathfrak{l}_k] \ne 1$.
Therefore, we have $A^{\Sigma}(k)\simeq \mathbb{Z}/2\mathbb{Z} \oplus \mathbb{Z}/2\mathbb{Z}$.
This completes the proof. 
\end{proof}

\begin{lemma}\label{4_2}
$A(k^{\vee})\simeq \mathbb{Z}/2\mathbb{Z} \oplus \mathbb{Z}/2\mathbb{Z} \oplus \mathbb{Z}/2\mathbb{Z}$.
\end{lemma}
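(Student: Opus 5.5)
The plan is to mirror the proof of Lemma \ref{4_1}, now for $k^{\vee}=\mathbb{Q}(\sqrt{2m})$, using genus theory for the $2$-rank and Proposition \ref{RRT} for the $4$-rank.

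\textbf{Discriminant and genus field.} Since $m\equiv 3 \pmod 8$, we have $D_{k^{\vee}}=8m$. The odd prime discriminants are $-p_1,-p_2,p_3,-p_4$, whose product is $-m$. Hence
\[
D_{k^{\vee}}=(-8)(-p_1)(-p_2)(p_3)(-p_4),
\]
so $\delta=-2$ and the narrow genus field is $\mathbb{Q}(\sqrt{-2},\sqrt{-p_1},\sqrt{-p_2},\sqrt{p_3},\sqrt{-p_4})$, of degree $32$ over $\mathbb{Q}$. Its maximal real subfield $k^{\vee}_G$ has degree $16$; explicitly, I expect $k^{\vee}_G=k^{\vee}(\sqrt{p_3},\sqrt{p_1p_2},\sqrt{p_1p_4})$. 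Hence $\mathrm{Gal}(k^{\vee}_G/k^{\vee})\simeq A(k^{\vee})/2A(k^{\vee})$ has order $8$, giving $\mathrm{rank}_2(A(k^{\vee}))=3$. Likewise $\mathrm{rank}_2(A^+(k^{\vee}))=4$.

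\textbf{Reduction to the narrow group.} It remains to show that $A(k^{\vee})$ has exponent $2$. The natural surjection $A^+(k^{\vee})\twoheadrightarrow A(k^{\vee})$ has kernel of order at most $2$. Therefore it suffices to prove $\mathrm{rank}_4(A^+(k^{\vee}))=0$: then $A^+(k^{\vee})\simeq(\mathbb{Z}/2\mathbb{Z})^4$, so its quotient $A(k^{\vee})$ is elementary abelian, and by the $2$-rank computed above $A(k^{\vee})\simeq(\mathbb{Z}/2\mathbb{Z})^3$. By Proposition \ref{RRT}, $\mathrm{rank}_4(A^+(k^{\vee}))=0$ is equivalent to $S_2(k^{\vee})=\{\{1,D_{k^{\vee}}\}\}$.

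\textbf{Checking the $16$ pairs.} The set $S_1(k^{\vee})$ has $16$ elements $\{D_1,D_2\}$, obtained by distributing the five prime discriminants $-8,-p_1,-p_2,p_3,-p_4$ between $D_1$ and $D_2$. For each nontrivial pair I will exhibit a prime $p\mid D_2$ with $\chi_{D_1}(p)=-1$, or vice versa. I will present these witnesses in a table, as in Table \ref{tab1}. The inputs are the following.
\begin{itemize}
\item[(a)] The values of $\chi_{-8}$ at the odd primes: $\left(\frac{-2}{p_1}\right)=\left(\frac{-2}{p_2}\right)=1$, while $\left(\frac{-2}{p_3}\right)=\left(\frac{-2}{p_4}\right)=-1$.
\item[(b)] The values of $\chi_{D_1}(2)$, read off from $p_i\bmod 8$.
\item[(c)] The symbols $\left(\frac{p_i^*}{p_j}\right)$, obtained from the prescribed Legendre symbols via quadratic reciprocity.
\end{itemize}
For example, $\left(\frac{-p_1}{p_2}\right)=-\left(\frac{p_2}{p_1}\right)$ is forced, since $p_1\equiv p_2\equiv 3\pmod 4$.

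\textbf{Main obstacle.} The hard step is verifying that every one of the $15$ nontrivial decompositions really fails. In particular, the symbols involving $p_4$ that are not prescribed (for instance $\left(\frac{p_2}{p_4}\right)$ and $\left(\frac{p_3}{p_4}\right)$) cannot be used as witnesses. So for each splitting I would first look for a witness among $\chi_{-8}(p_3)$, $\chi_{-8}(p_4)$, $\chi_{*}(2)$, $\left(\frac{p_1}{p_3}\right)$, $\left(\frac{p_1}{p_4}\right)$ and $\left(\frac{p_1}{p_2}\right)$. The congruence $p_4\equiv 15\pmod{16}$ may be needed at $2$ for the pairs in which $-8$ and $-p_4$ lie on opposite sides.

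If some decomposition resists this search, I would fall back on genus-theoretic reasoning: show that it cannot occur simultaneously with the already-established failures, because $S_2(k^{\vee})$ must have cardinality a power of $2$ and be closed under the natural group law on factorizations.
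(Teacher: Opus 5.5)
\textbf{Verdict.} Your overall route is the same as the paper's, but the decisive step is not carried out, and the one concrete input you give for it is wrong. The route is: genus theory for $\mathrm{rank}_2(A(k^\vee))=3$, then Proposition~\ref{RRT} for $\mathrm{rank}_4(A^+(k^\vee))=0$, then $A(k^\vee)$ as an elementary abelian quotient of $A^+(k^\vee)$. Your genus field $k^\vee(\sqrt{p_3},\sqrt{p_1p_2},\sqrt{p_1p_4})$ agrees with the paper's $k^\vee(\sqrt{2p_1},\sqrt{2p_2},\sqrt{p_3})$. Your reduction from $A^+$ to $A$ is stated more carefully than in the paper. The substance of the lemma, however, is the verification $S_2(k^\vee)=\{\{1,D_{k^\vee}\}\}$, which you only defer.

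\textbf{The sign error.} Since $\left(\frac{-1}{p_2}\right)=-1$, one has $\left(\frac{-p_1}{p_2}\right)=-\left(\frac{p_1}{p_2}\right)=\left(\frac{p_2}{p_1}\right)$. In general $\left(\frac{p^*}{q}\right)=\left(\frac{q}{p}\right)$. So the true value is $-1$, whereas your formula $-\left(\frac{p_2}{p_1}\right)$ gives $+1$.

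\textbf{Why this breaks the plan.} Suppose you compute through the factors $\left(\frac{p_i^*}{p_j}\right)$, as in your item (c), and apply your rule to all pairs of primes $\equiv 3 \pmod 4$, for instance $\left(\frac{-p_4}{p_1}\right)$, $\left(\frac{-p_2}{p_1}\right)$, $\left(\frac{-p_1}{p_4}\right)$. Then three pairs lose every witness coming from the prescribed data:
\begin{itemize}
\item[(i)] $\{-p_4,-8p_1p_2p_3\}$, whose witness is $\chi_{-p_4}(p_1)$;
\item[(ii)] $\{-p_2p_3,-8p_1p_4\}$, whose witness is $\chi_{-p_2p_3}(p_1)$;
\item[(iii)] $\{8p_1,p_2p_3p_4\}$, whose witnesses are $\chi_{8p_1}(p_2)$ and $\chi_{8p_1}(p_4)$.
\end{itemize}
All these witnesses flip to $+1$. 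Every remaining condition for these pairs is either $+1$ or involves the unprescribed symbols $\left(\frac{p_2}{p_4}\right)$, $\left(\frac{p_3}{p_4}\right)$.

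\textbf{The fallback does not help.} These three pairs together with $\{1,D_{k^\vee}\}$ are closed under multiplication of factorizations; for example $(-p_4)(-p_2p_3)=p_2p_3p_4$. So they form a set of order $4$. The conditions ``cardinality a power of $2$'' and ``closed under the group law'' therefore exclude nothing.

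\textbf{The fix.} Evaluate $\chi_{D_1}(p)=\left(\frac{D_1}{p}\right)$ multiplicatively, using $\left(\frac{\pm1}{p}\right)$, $\left(\frac{\pm2}{p}\right)$ and the correct reciprocity. Then every one of the $15$ nontrivial pairs has a witness from the hypotheses; this is the paper's Table~\ref{tab2}. For the three pairs above:
\begin{itemize}
\item[(i)] $\chi_{-p_4}(p_1)=\left(\frac{-1}{p_1}\right)\left(\frac{p_4}{p_1}\right)=(-1)(1)=-1$;
\item[(ii)] $\chi_{-p_2p_3}(p_1)=\left(\frac{-1}{p_1}\right)\left(\frac{p_2}{p_1}\right)\left(\frac{p_3}{p_1}\right)=(-1)(-1)(-1)=-1$;
\item[(iii)] $\chi_{8p_1}(p_2)=\left(\frac{2}{p_2}\right)\left(\frac{p_1}{p_2}\right)=(-1)(1)=-1$.
\end{itemize}

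\textbf{A side remark.} Your guess that $p_4\equiv 15\pmod{16}$ might be needed is unfounded. The Kronecker symbol at $2$ only depends on residues mod $8$, so only $p_4\equiv 7\pmod 8$ enters this lemma. The pairs separating $-8$ from $-p_4$ are obstructed either at an odd prime or via $p_1p_4\equiv p_2p_4\equiv -p_3p_4\equiv 5\pmod 8$.
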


\begin{proof}
Since the narrow genus field of $k^\vee$ is $\mathbb{Q}(\sqrt{-2},\sqrt{-p_1},\sqrt{-p_2},\sqrt{p_3},\sqrt{-p_4})$, we see that the genus field $k_{G}^\vee = k^\vee(\sqrt{2p_1},\sqrt{2p_2}, \sqrt{p_3}, \sqrt{2p_4})$ is the maximal unramified elementary abelian $2$-extension of $k^\vee$, which is a subextension of $L(k^\vee)/k^\vee$.
By the Artin map, $\mathrm{Gal}(k_{G}^\vee / k^\vee) \simeq A(k^\vee)/2A(k^\vee) $.
This implies $\mathrm{rank}_2(A(k^\vee))=3$.
The discriminant of $k^\vee$ is $D_{k^\vee}= 8p_1p_2p_3p_4$.
The elements of $S_1(k^\vee)$ are listed in Table \ref{tab2}. 
For each element not belonging to $S_2(k^\vee)$, the table also gives a Kronecker symbol whose value is $-1$.
Therefore, we have $S_2(k^\vee)= \{\{1,D_{k^\vee} \}\}$.
By Proposition \ref{RRT}, $\mathrm{rank}_4(A^+(k^\vee))=0$, and thus we have $A(k^\vee)\simeq \mathbb{Z}/2\mathbb{Z} \oplus \mathbb{Z}/2\mathbb{Z} \oplus \mathbb{Z}/2\mathbb{Z}$.
This completes the proof.
\end{proof}

\begin{table}[htpb]
\centering
\caption{The elements of $S_1(k)$ and the obstructions to $S_2(k)$}
$
\begin{array}{cc||cc} \hline
\quad\quad\quad S_1(k) \quad\quad\quad &\quad \text{Kronecker} \quad &  \quad\quad\quad S_1(k)\quad\quad\quad &\quad \text{Kronecker} \quad  \\ \hline
\{1,D_k\}              & - &\{-4p_3,-p_1p_2p_4\} & \chi_{-4p_3}(p_2) \\
\{-4,-p_1p_2p_3p_4\}   &\chi_{-4}(p_1) &\{4p_4,p_1p_2p_3\} &\chi_{p_1p_2p_3}(2) \\ 
\{-p_1,-4p_2p_3p_4\}   &\chi_{-p_1}(2) &\{p_1p_2,4p_3p_4\} &\chi_{p_1p_2}(p_3) \\
\{-p_2,-4p_1p_3p_4\}   &\chi_{-p_2}(2) &\{-p_1p_3,-4p_2p_4\} &\chi_{-p_1p_3}(p_2) \\ 
\{p_3,4p_1p_2p_4\}     &\chi_{p_3}(2)  &\{p_1p_4,4p_2p_3\} &\chi_{p_1p_4}(2) \\
\{-p_4,-4p_1p_2p_3\}   &\chi_{-p_4}(p_1) &\{-p_2p_3,-4p_1p_4\}&\chi_{-p_2p_3}(p_1) \\
\{4p_1,p_2p_3p_4\}     &\chi_{4p_1}(p_3) &\{p_2p_4,4p_1p_3\}  &\chi_{p_2p_4}(2) \\
\{4p_2,p_1p_3p_4\}     &\chi_{4p_2}(p_1) &\{-p_3p_4,-4p_1p_2\} &\chi_{-p_3p_4}(2)  \\ \hline
\end{array}
$
\label{tab1}
\end{table}

\begin{table}[htpb]
\centering
\caption{The elements of $S_1(k^\vee)$ and the obstructions to $S_2(k^\vee)$}
$
\begin{array}{cc||cc} \hline
\quad\quad\quad S_1(k^\vee)\quad\quad\quad  &\quad \text{Kronecker} \quad & \quad\quad\quad  S_1(k^\vee) \quad\quad\quad &\quad \text{Kronecker} \quad  \\ \hline
\{1,D_{k^\vee}\}              & - &\{-8p_3,-p_1p_2p_4\} &\chi_{-8p_3}(p_1) \\
\{-8,-p_1p_2p_3p_4\}   &\chi_{-8}(p_3) &\{8p_4,p_1p_2p_3\} &\chi_{8p_4}(p_1) \\ 
\{-p_1,-8p_2p_3p_4\}   &\chi_{-p_1}(2) &\{p_1p_2,8p_3p_4\} &\chi_{p_1p_2}(p_3) \\
\{-p_2,-8p_1p_3p_4\}   &\chi_{-p_2}(2) &\{-p_1p_3,-8p_2p_4\} &\chi_{-p_1p_3}(p_2)  \\ 
\{p_3,8p_1p_2p_4\}     &\chi_{p_3}(2)  &\{p_1p_4,8p_2p_3\} &\chi_{p_1p_4}(2) \\
\{-p_4,-8p_1p_2p_3\}   &\chi_{-p_4}(p_1) &\{-p_2p_3,-8p_1p_4\}&\chi_{-p_2p_3}(p_1)  \\
\{8p_1,p_2p_3p_4\}     &\chi_{8p_1}(p_2) &\{p_2p_4,8p_1p_3\} &\chi_{p_2p_4}(2) \\
\{8p_2,p_1p_3p_4\}     &\chi_{8p_2}(p_3) &\{-p_3p_4,-8p_1p_2\} &\chi_{-p_3p_4}(2) \\ \hline
\end{array}
$

\label{tab2}
\end{table}

\begin{remark}\label{4_3}
By the proofs of Lemma \ref{4_1} and \ref{4_2}, we have
\[
L(k)= k_G= k(\sqrt{p_1},\sqrt{p_2},\sqrt{p_3}), \quad  L(k^\vee)= k^\vee_G= k^\vee(\sqrt{2p_1},\sqrt{2p_2},\sqrt{p_3}).
\]
Since $2$ is inert in $\mathbb{Q}(\sqrt{p_3})/\mathbb{Q}$, we see that $k(\sqrt{p_3}) \not \subset L^{\Sigma}(k)$ and $k^{\vee}(\sqrt{p_3})\not \subset L^{\Sigma}(k^{\vee})$.
Therefore, we have  
\[
L^{\Sigma}(k)= k(\sqrt{p_1},\sqrt{p_2}), \quad  L^{\Sigma}(k^\vee)=  k^\vee(\sqrt{2p_1},\sqrt{2p_2}).
\]
We have completely determined $L^{\Sigma}(k)$ and $L^{\Sigma}(k^\vee)$.
This fact is important for the following argument.
\end{remark}

\begin{lemma}\label{4_4}
$A(k_1)\simeq \mathbb{Z}/4\mathbb{Z} \oplus \mathbb{Z}/2\mathbb{Z} \oplus \mathbb{Z}/2\mathbb{Z}$ and $A^{\Sigma}(k_1)\simeq \mathbb{Z}/4\mathbb{Z} \oplus \mathbb{Z}/2\mathbb{Z}$.
\end{lemma}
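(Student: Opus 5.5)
We work in $k_1=\mathbb{Q}(\sqrt{2},\sqrt{m})$, a real biquadratic field with quadratic subfields $\mathbb{Q}(\sqrt 2)$, $k$ and $k^\vee$. We first determine $|A(k_1)|$ from Proposition \ref{KKT}, then $\mathrm{rank}_2(A(k_1))$ from Proposition \ref{GF}, and finally the $4$-rank and the class of the prime above $2$.

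\textbf{Order.} Since $h(\mathbb{Q}(\sqrt2))=1$, Lemmas \ref{4_1} and \ref{4_2} give
\[
|A(k_1)| = \tfrac14\, Q(k_1)\cdot 8\cdot 8 = 16\, Q(k_1)
\]
up to odd factors. So $|A(k_1)|=32$ exactly when $Q(k_1)=2$, and establishing this is the main obstacle.

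\textbf{Unit index $Q(k_1)=2$.} Here $N(\varepsilon_2)=-1$. Since $p_1\equiv 3\pmod 4$ divides $m$ and $2m$, both $\varepsilon_m$ and $\varepsilon_{2m}$ have norm $+1$. By Proposition \ref{KKT}, the only possibilities are forms (1), (2), (4) (with $\varepsilon_m,\varepsilon_{2m}$ in the roles of $\varepsilon_1,\varepsilon_2$) and form (7), which is excluded because the norms are mixed.

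For $\varepsilon_m=x+y\sqrt m$ of norm $1$, the standard argument gives $x\pm1 = 2d\,u^2$ or $d\,u^2$ with $d\mid m$ (or $d\mid 2m$), so $\varepsilon_m = d\cdot(\text{square})$ or $2d\cdot(\text{square})$ in $k$. Hence $\sqrt{\varepsilon_m}\in k_1$ if and only if this $d$ is $1$, $2$, $m$ or $2m$ up to squares. I would then eliminate the other divisors $d$ by Legendre-symbol computations modulo the $p_i$, using the congruences of Theorem \ref{MT1}, in the spirit of Tables \ref{tab1} and \ref{tab2}. The expected outcome is that exactly one of $\varepsilon_m$, $\varepsilon_{2m}$, $\varepsilon_m\varepsilon_{2m}$ is a square in $k_1$, giving $Q(k_1)=2$ and $|A(k_1)|=32$. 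If this case analysis proves unwieldy, an alternative is to compute the norm index $(E(k):E(k)\cap N_{k_1/k}k_1^\times)$ via Hilbert symbols $(\varepsilon_m,2)_{\mathfrak p}$ and $(-1,2)_{\mathfrak p}$ at the primes above the $p_i$.

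\textbf{$2$-rank.} Apply Proposition \ref{GF} to $k_1/\mathbb{Q}(\sqrt2)$, which has class number $1$. By the remark after that proposition, $|A(k_1)/2A(k_1)|=|A(k_1)^{\mathrm{Gal}}|$. The ramified primes are the primes of $\mathbb{Q}(\sqrt2)$ above $p_1,\dots,p_4$:
\begin{itemize}
\item $p_1,p_2\equiv3\pmod8$ are inert, so each contributes one prime;
\item $p_3\equiv5\pmod8$ is inert, contributing one prime;
\item $p_4\equiv15\pmod{16}$ splits, contributing two primes.
\end{itemize}
Adding the prime above $2$ (and no infinite primes, since $k_1$ is real), we get $t=6$.

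The unit group of $\mathbb{Q}(\sqrt2)$ is $\langle -1,1+\sqrt2\rangle$. Computing norm residue symbols of $-1$ and $1+\sqrt2$ at these primes, I expect the norm index to be $4$. This gives $|A(k_1)/2A(k_1)|=2^5/4=8$, so $\mathrm{rank}_2(A(k_1))=3$. Combined with $|A(k_1)|=32$, we get $A(k_1)\simeq \mathbb{Z}/4\oplus\mathbb{Z}/2\oplus\mathbb{Z}/2$.

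\textbf{Passing to $A^\Sigma(k_1)$.} Let $\mathfrak l_{k_1}$ be the prime above $2$. Since $\mathfrak l_{k_1}^2$ is principal (it comes from $\mathfrak l_k$, up to $\sqrt2$), its class has order at most $2$. It is nontrivial because $\mathfrak l_{k_1}$ is inert in $k_1(\sqrt{p_3})$, as $2$ is inert in $\mathbb{Q}(\sqrt{p_3})$. So $|A^\Sigma(k_1)|=16$.

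The map $A^\Sigma(k_1)\to A^\Sigma(k)$ is surjective onto $(\mathbb{Z}/2)^2$. Moreover $L^\Sigma(k_1)$ contains $k_1(\sqrt{p_1},\sqrt{p_2})$, while $k_1(\sqrt{p_3})\not\subset L^\Sigma(k_1)$. Hence $\mathrm{rank}_2(A^\Sigma(k_1))=2$, and the quotient by the order-$2$ class $[\mathfrak l_{k_1}]$ forces $A^\Sigma(k_1)\simeq\mathbb{Z}/4\oplus\mathbb{Z}/2$. This last step needs $[\mathfrak l_{k_1}]\notin 2A(k_1)$ to be ruled out appropriately, which is read off from its Artin symbol in the genus-type field $k_1(\sqrt{p_1},\sqrt{p_2},\sqrt{p_3})$.
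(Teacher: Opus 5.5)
\textbf{There is a genuine error in your order computation, and it propagates through the whole proposal.} The group $\mathbb{Z}/4\mathbb{Z}\oplus\mathbb{Z}/2\mathbb{Z}\oplus\mathbb{Z}/2\mathbb{Z}$ has order $16$, not $32$. So from $|A(k_1)|=16\,Q(k_1)$ the lemma requires $Q(k_1)=1$, not $Q(k_1)=2$.

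Your ``expected outcome'', that exactly one of $\varepsilon_m,\varepsilon_{2m},\varepsilon_m\varepsilon_{2m}$ is a square in $k_1$, is false under the hypotheses of Theorem~\ref{MT1}. If it held, then $|A(k_1)|=32$ together with $\mathrm{rank}_2(A(k_1))=3$ would give $\mathbb{Z}/8\mathbb{Z}\oplus(\mathbb{Z}/2\mathbb{Z})^2$ or $(\mathbb{Z}/4\mathbb{Z})^2\oplus\mathbb{Z}/2\mathbb{Z}$, contradicting the statement. For the same reason your $|A^{\Sigma}(k_1)|=16$ should be $8$.

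Your list of admissible unit forms also omits form (3) of Proposition~\ref{KKT}. What actually matters is that $Q(k_1)=1$ if and only if none of $\sqrt{\varepsilon_m}$, $\sqrt{\varepsilon_{2m}}$, $\sqrt{\varepsilon_m\varepsilon_{2m}}$ lies in $k_1$. The divisor-$d$ analysis you sketch is aimed at the opposite conclusion, and in any case it is not carried out.

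\textbf{The missing idea is how to pin down $\varepsilon_m$ and $\varepsilon_{2m}$ modulo squares.} The paper does this as follows. Since the $4$-ranks vanish, $L(k)=k(\sqrt{p_1},\sqrt{p_2},\sqrt{p_3})$ and $L(k^\vee)=k^\vee(\sqrt{2p_1},\sqrt{2p_2},\sqrt{p_3})$, so ideal classes are detected by explicit Artin symbols.
\begin{itemize}
\item In $k^\vee$, the splitting data give $[\mathfrak{l}_{k^\vee}]=[\mathfrak{p}_{1,k^\vee}]$. Hence $2=\varepsilon_{2m}^{z'}(\alpha')^2p_1$ with $z'$ odd (otherwise $\sqrt{p_1}\in k_1$), so $k_1(\sqrt{\varepsilon_{2m}})=k_1(\sqrt{p_1})$.
\item In $k$, if $\left(\frac{p_2}{p_4}\right)=-1$ then $\mathfrak{p}_{2,k}$ is principal, giving $k_1(\sqrt{\varepsilon_m})=k_1(\sqrt{p_2})$. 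If $\left(\frac{p_2}{p_4}\right)=1$ then $[\mathfrak{l}_k]=[\mathfrak{p}_{1,k}\mathfrak{p}_{2,k}]$, giving $k_1(\sqrt{\varepsilon_m})=k_1(\sqrt{p_1p_2})$.
\end{itemize}
Since $\sqrt{p_1},\sqrt{p_2},\sqrt{p_1p_2}\notin k_1$ and the two resulting fields are distinct, $Q(k_1)=1$.

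Your $2$-rank step is the paper's, but the norm index must be computed, not merely expected. First, $-1$ is not a norm, because at a prime above the split prime $p_4$ we have $(-1,m)_{\mathfrak{p}}=\left(\frac{-1}{p_4}\right)=-1$. Second, $\pm\varepsilon_2$ are not norms, because at the inert prime above $p_1$ the symbol equals $\left(\frac{N(\varepsilon_2)}{p_1}\right)=\left(\frac{-1}{p_1}\right)=-1$. Hence the index is $4$ and $\mathrm{rank}_2(A(k_1))=3$.

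Once $|A(k_1)|=16$ is established, your last step goes through exactly as in the paper. The class $[\mathfrak{l}_{k_1}]$ has order $2$ and lies outside $2A(k_1)$, as its Artin symbol on $k_1(\sqrt{p_3})$ shows. This yields $A^{\Sigma}(k_1)\simeq\mathbb{Z}/4\mathbb{Z}\oplus\mathbb{Z}/2\mathbb{Z}$.
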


\begin{proof}
By Proposition \ref{GF} for $k_1/\mathbb{Q}_1$, we have
\[
|A(k_1)^{\mathrm{Gal}(k_1/\mathbb{Q}_1)}| = \frac{32}{(E(\mathbb{Q}_1) : E(\mathbb{Q}_1) \cap N_{k_1/\mathbb{Q}_1}(k_1^{\times}))}.
\]
We first show that $(E(\mathbb{Q}_1) : E(\mathbb{Q}_1) \cap N_{k_1/\mathbb{Q}_1}(k_1^{\times})) = 4$. 
Since $p_4$ splits in $\mathbb{Q}_1/\mathbb{Q}$, we have $(-1, m)_{\mathfrak{p}_{4, \mathbb{Q}_1}} = \left( \frac{-1}{p_4}\right) = -1$, which implies $-1\notin N_{k_1/\mathbb{Q}_1}(k_1^{\times})$. 
Since $p_1$ is inert in $\mathbb{Q}_1/\mathbb{Q}$, we have 
$(\varepsilon_2, m)_{\mathfrak{p}_{1, \mathbb{Q}_1}} = \left( \frac{ \varepsilon_2}{\mathfrak{p}_{1, \mathbb{Q}_1}}\right) = \left( \frac{ N_{\mathbb{Q}_1/\mathbb{Q}}(\varepsilon_2)}{p_1}\right)= \left( \frac{-1}{p_1}\right) =-1$, which implies $\varepsilon_2\notin N_{k_1/\mathbb{Q}_1}(k_1^{\times})$.
Furthermore, $(-\varepsilon_2, m)_{\mathfrak{p}_{1, \mathbb{Q}_1}} =( -1, m)_{\mathfrak{p}_{1, \mathbb{Q}_1}}( \varepsilon_2, m)_{\mathfrak{p}_{1, \mathbb{Q}_1}}=  \left( \frac{-1}{\mathfrak{p}_{1, \mathbb{Q}_1}}\right)\left(\frac{ \varepsilon_2}{\mathfrak{p}_{1, \mathbb{Q}_1}}\right) = 1\cdot (-1)=-1$, which implies $-\varepsilon_2\notin N_{k_1/\mathbb{Q}_1}(k_1^{\times})$.
It follows that $E(\mathbb{Q}_1) \cap N_{k_1/\mathbb{Q}_1}(k_1^{\times})= E(\mathbb{Q}_1)^2$. 
Therefore, $(E(\mathbb{Q}_1) : E(\mathbb{Q}_1) \cap N_{k_1/\mathbb{Q}_1}(k_1^{\times})) = (E(\mathbb{Q}_1) : E(\mathbb{Q}_1)^2) = 4$.
Since $|A(\mathbb{Q}_1)|=1$, we have $|A(k_1)^{\mathrm{Gal}(k_1/\mathbb{Q}_1)}|=|A(k_1)/2A(k_1)|= 2^{\mathrm{rank}_2(A(k_1))}$, and hence $\mathrm{rank}_2(A(k_1))= 3$. 
By Lemma \ref{4_1} and \ref{4_2}, we see that $|A(k)|=|A(k^\vee)|=8$.
By Proposition \ref{KKT} for $k_1/\mathbb{Q}$, we have
\[
|A(k_1)|= \frac{1}{4}\cdot Q(k_1)\cdot |A(k)|\cdot |A(k^{\vee})|\cdot |A(\mathbb{Q}_1)| = \frac{1}{4}\cdot Q(k_1) \cdot 8\cdot 8 \cdot 1 = 16\cdot Q(k_1).
\]
If a system of fundamental units of $k_1$ is $\{ \varepsilon_2, \varepsilon_m, \varepsilon_{2m} \}$, then $Q(k_1)=1$, which implies $|A(k_1)|=16$.
Under this assumption, since $\mathrm{rank}_2(A(k_1))= 3$, it follows that $A(k_1)\simeq \mathbb{Z}/4\mathbb{Z} \oplus \mathbb{Z}/2\mathbb{Z} \oplus \mathbb{Z}/2\mathbb{Z}$.
We now prove that $\{ \varepsilon_2, \varepsilon_m, \varepsilon_{2m} \}$ is a system of fundamental units of $k_1$.
By the proof of Lemma \ref{4_1}, the narrow genus field of $k$ is different from the genus field $k_G$. 
Thus, we have $A(k) \neq A^+(k)$. 
Similarly, by the proof of Lemma \ref{4_2}, we have $A(k^\vee) \neq A^+(k^\vee)$. 
Therefore, we have $N_{k/\mathbb{Q}}(\varepsilon_m) = N_{k^{\vee}/\mathbb{Q}}(\varepsilon_{2m}) = 1$.
On the other hand, we easily see that $N_{\mathbb{Q}_1/\mathbb{Q}}(\varepsilon_2)=-1$.
By Proposition \ref{KKT}, it suffices to show that $\sqrt{\varepsilon_m}, \sqrt{\varepsilon_{2m}}, \sqrt{\varepsilon_m\varepsilon_{2m}} \notin k_1$.
We consider two cases, depending on the value of $\left( \frac{p_2}{p_4}\right)$.
We also recall that $L(k)= k(\sqrt{p_1}, \sqrt{p_2}, \sqrt{p_3})$ and $L(k^{\vee})= k^{\vee}(\sqrt{2p_1}, \sqrt{2p_2}, \sqrt{p_3})$ (see Remark \ref{4_3}).

\noindent \textbf{Case 1:} 
We suppose that $\left( \frac{p_2}{p_4}\right)=-1$.
First, we show that $\sqrt{\varepsilon_m} \notin k_1$. 
Since $\mathfrak{p}_{2, k}$ splits completely in $L(k)/k$, it is a principal ideal in $k$.	
Hence, there exists $\alpha \in k^{\times}$ such that $\mathfrak{p}_{2, k} = \alpha \mathcal{O}_k$, meaning $p_2= \varepsilon_m^z\alpha^2$ for some $z\in \mathbb{Z}$. 
If $z$ is even, then $\sqrt{p_2} = \pm \varepsilon_m^{\frac{z}{2}}\alpha \in k$.
This is a contradiction.
Thus, $z$ must be odd, giving $p_2=\varepsilon_m \beta^2$ for some $\beta \in k^{\times}$. 
This implies $k(\sqrt{p_2})= k(\sqrt{\varepsilon_m})$, and since $\sqrt{p_2} \notin k_1$, we obtain $\sqrt{\varepsilon_m} \notin k_1$.
Next, we show that $\sqrt{\varepsilon_{2m}} \notin k_1$. 
Put $K^{\vee}_i= k^{\vee}(\sqrt{2p_i})$ for each $i \in \{ 1,2\}$ and $K^\vee_3= k^\vee(\sqrt{p_3})$.
Then we see that $L(k^\vee)=K^{\vee}_1K^{\vee}_2K^{\vee}_3$.
Table \ref{tab3} gives the values used to determine the splitting behavior of $\mathfrak{l}_{k^\vee}$ and $\mathfrak{p}_{1,k^{\vee}}$ in $K^{\vee}_i/k^{\vee}$.
The first row gives the values modulo $8$, and the second row gives the corresponding quadratic residue symbols.
In the table, ``sp'' means that the prime ideal splits in $K_i^\vee/k^\vee$, whereas ``in'' means that it is inert in $K_i^\vee/k^\vee$.
In the cases listed in the table, an entry congruent to $1\pmod 8$, or a quadratic residue symbol equal to $1$, means that a prime splits in $K^{\vee}_i/k^{\vee}$, whereas an entry congruent to $5\pmod 8$, or a quadratic residue symbol equal to $-1$, means that it is inert in $K^{\vee}_i/k^{\vee}$.
Hence,
\[
\left( \frac{L(k^\vee)/k^\vee}{\mathfrak{l}_{k^\vee}}\right) = \left( \frac{L(k^\vee)/k^\vee}{\mathfrak{p}_{1, k^\vee}} \right),
\]
which implies $[\mathfrak{l}_{k^\vee}]=[\mathfrak{p}_{1, k^\vee}]$. 
Thus, there exists $\alpha' \in (k^\vee)^\times$ such that $\mathfrak{l}_{k^\vee} = (\alpha' \mathcal{O}_{k^\vee})\mathfrak{p}_{1, k^\vee}$, meaning $2= \varepsilon_{2m}^{z'}(\alpha')^2p_1$ for some $z'\in \mathbb{Z}$. 
If $z'$ is even, then $\sqrt{2} = \pm \varepsilon_{2m}^{\frac{z'}{2}}\alpha' \sqrt{p_1}$, which implies $\sqrt{p_1} \in k_1$.
This is a contradiction.
Thus, $z'$ must be odd, giving $2=\varepsilon_{2m} (\beta')^2p_1$ for some $\beta' \in (k^\vee)^{\times}$. 
This implies $k_1(\sqrt{p_1}) = k_1(\sqrt{\varepsilon_{2m}})$, and since $\sqrt{p_1} \notin k_1$, we obtain $\sqrt{\varepsilon_{2m}} \notin k_1$.
Lastly, we have  $k_1(\sqrt{p_1}) = k_1(\sqrt{\varepsilon_{2m}})$ and $k_1(\sqrt{p_2}) = k_1(\sqrt{\varepsilon_{m}})$.
Since $\sqrt{p_1p_2}\notin k_1$, we have $k_1(\sqrt{p_1})\neq k_1(\sqrt{p_2})$.
Therefore, $k_1(\sqrt{\varepsilon_m}) \neq k_1(\sqrt{\varepsilon_{2m}})$.
It follows that
$\sqrt{\varepsilon_m\varepsilon_{2m}}\notin k_1$.

\noindent \textbf{Case 2:} 
We suppose  that $\left( \frac{p_2}{p_4}\right)=1$.
First, we show that $\sqrt{\varepsilon_m} \notin k_1$. 
Put $K_i= k(\sqrt{p_i})$ for each $i \in \{ 1,2,3 \}$.
Then we see that $L(k)=K_1K_2K_3$.
As in Case 1, Table \ref{tab4} gives the values used to determine the splitting behavior of $\mathfrak{l}_k, \mathfrak{p}_{1,k}$ and $\mathfrak{p}_{2,k}$ in $K_i/k$.
Hence,
\[
\left( \frac{L(k)/k}{\mathfrak{l}_{k}}\right) = \left( \frac{L(k)/k}{\mathfrak{p}_{1, k}} \right) \left( \frac{L(k)/k}{\mathfrak{p}_{2, k}} \right)=\left( \frac{L(k)/k}{\mathfrak{p}_{1, k}\mathfrak{p}_{2, k}} \right),
\]
which implies $[\mathfrak{l}_{k}]=[\mathfrak{p}_{1, k}\mathfrak{p}_{2, k}]$. 
Thus, there exists $\alpha \in k^\times$ such that $\mathfrak{l}_{k} = (\alpha \mathcal{O}_{k})\mathfrak{p}_{1, k}\mathfrak{p}_{2, k}$, meaning $2= \varepsilon_m^{z}\alpha^2p_1p_2$ for some $z \in \mathbb{Z}$.
If $z$ is even, then $\sqrt{2} = \pm \varepsilon_{m}^{\frac{z}{2}}\alpha \sqrt{p_1p_2}$, which implies $\sqrt{p_1p_2} \in k_1$.
This is a contradiction.
Thus, $z$ must be odd, giving $2=\varepsilon_{m} \beta^2 p_1p_2$ for some $\beta \in k^{\times}$.
This implies $k_1(\sqrt{p_1p_2}) = k_1(\sqrt{\varepsilon_m})$, and since $\sqrt{p_1p_2} \notin  k_1$, we obtain $\sqrt{\varepsilon_{m}} \notin k_1$.
The argument used in Case 1 to prove $\sqrt{\varepsilon_{2m}} \notin k_1$ does not depend on the value of $\left(\frac{p_2}{p_4}\right)$.
Hence, $\sqrt{\varepsilon_{2m}}\notin k_1$ and $k_1(\sqrt{p_1}) = k_1(\sqrt{\varepsilon_{2m}})$.
Lastly, we have  $k_1(\sqrt{p_1p_2}) = k_1(\sqrt{\varepsilon_{m}})$ and $k_1(\sqrt{p_1}) = k_1(\sqrt{\varepsilon_{2m}})$.
Since $\sqrt{p_2}\notin k_1$, we have $k_1(\sqrt{p_1p_2})\neq k_1(\sqrt{p_1})$.
Therefore, $k_1(\sqrt{\varepsilon_{m}}) \neq k_1(\sqrt{\varepsilon_{2m}})$.
It follows that
$\sqrt{\varepsilon_m\varepsilon_{2m}}\notin k_1$.

\noindent We conclude  that $A(k_1)\simeq \mathbb{Z}/4\mathbb{Z} \oplus \mathbb{Z}/2\mathbb{Z} \oplus \mathbb{Z}/2\mathbb{Z}$.
Finally, we determine the structure of $A^{\Sigma}(k_1)$.
Since $\mathfrak{l}_{k_1}^2=\sqrt{2}\mathcal{O}_{k_1}$, the order of $[\mathfrak{l}_{k_1}]$ is at most two.
The extension $k_1(\sqrt{p_3})/k_1$ is an unramified quadratic extension (see Remark \ref{4_3}).
Since $\mathfrak{l}_{k_1}$ is inert in $k_1(\sqrt{p_3})/k_1$, the restriction of the Artin symbol $\left(\frac{k_1L(k)/k_1}{\mathfrak{l}_{k_1}} \right)$ to $k_1(\sqrt{p_3})$ is nontrivial. 
Hence, $\left(\frac{k_1L(k)/k_1}{\mathfrak{l}_{k_1}}\right)$ is nontrivial.
On the other hand, the Artin map $A(k_1)\rightarrow \mathrm{Gal}(k_1L(k)/k_1)$ sends every element of $2A(k_1)$ to the identity. 
Indeed, since $\mathrm{Gal}(k_1L(k)/k_1) \simeq A(k) \simeq \mathbb{Z}/2\mathbb{Z} \oplus \mathbb{Z}/2\mathbb{Z} \oplus \mathbb{Z}/2\mathbb{Z}$, this Galois group has exponent $2$.
Thus, the image of the square of any ideal class is trivial. 
Consequently, $[\mathfrak{l}_{k_1}]\notin 2A(k_1)$.
This implies that $D_{\Sigma}(k_1) \not\subset 2A(k_1)$. 
Therefore, $A^{\Sigma}(k_1) = A(k_1)/D_{\Sigma}(k_1) \simeq \mathbb{Z}/4\mathbb{Z} \oplus \mathbb{Z}/2\mathbb{Z}$.
This completes the proof.
\end{proof}

\begin{table}[htbp]
\centering
\caption{The splitting behavior in $K_i^\vee/k^\vee$}
$
\begin{array}{c|ccc} \hline
&\qquad\qquad K^\vee_1 \qquad\qquad & \qquad\qquad K^\vee_2\qquad\qquad & \qquad\qquad K^\vee_3\qquad\qquad \\ \hline
\mathfrak{l}_{k^\vee}   \,\, & \,\, p_2p_3p_4\equiv 1\quad\text{(sp)} & p_1p_3p_4\equiv 1\quad\text{(sp)} &p_3\equiv 5 \quad\text{(in)} \\
\mathfrak{p}_{1,k^\vee} \,\,&\,\, \big(\frac{p_2p_3p_4}{p_1}\big)=1 \quad\text{(sp)}&\big(\frac{2p_2}{p_1}\big)=1 \quad\text{(sp)}&\big(\frac{p_3}{p_1}\big)=-1 \!\!\quad\text{(in)} \\ \hline
\end{array}
$
\label{tab3}
\end{table}

\begin{table}[htbp]
\centering
\caption{The splitting behavior in $K_i/k$}
$
\begin{array}{c|ccc} \hline
&\qquad\qquad K_1 \qquad\qquad & \qquad\qquad K_2\qquad\qquad & \qquad\qquad K_3\qquad\qquad \\ \hline
\mathfrak{l}_k   \,\, & \,\, p_2p_3p_4\equiv 1\quad\text{(sp)} & p_1p_3p_4\equiv 1\quad\text{(sp)} &p_3\equiv 5 \quad\text{(in)} \\
\mathfrak{p}_{1,k} \,\,&\,\, \big(\frac{p_2p_3p_4}{p_1}\big)=1 \quad\text{(sp)}&\big(\frac{p_2}{p_1}\big)=-1 \quad\text{(in)}&\big(\frac{p_3}{p_1}\big)=-1 \quad\text{(in)} \\
\mathfrak{p}_{2,k} \,\, &\,\, \big(\frac{p_1}{p_2}\big)=1 \quad\text{(sp)}&\big(\frac{p_1p_3p_4}{p_2}\big)=-1 \quad\text{(in)}&\big(\frac{p_3}{p_2}\big) =1 \quad\text{(sp)}\\ \hline
\end{array}
$

\label{tab4}
\end{table}

\begin{lemma}\label{4_6}
$\mathrm{rank}_4(A^{\Sigma}(k_2))=1$.
\end{lemma}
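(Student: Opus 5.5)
We need $\mathrm{rank}_4(A^{\Sigma}(k_2))=1$. By Lemmas \ref{4_1} and \ref{4_4} we have $\mathrm{rank}_2(A^{\Sigma}(k))=\mathrm{rank}_2(A^{\Sigma}(k_1))=2$. Condition (1) of Theorem \ref{MT2} holds, so Proposition \ref{FT1} (2) with $n_0=0$ gives $\mathrm{rank}_2(A^{\Sigma}(k_2))=2$. The surjection $A^{\Sigma}(k_2)\twoheadrightarrow A^{\Sigma}(k_1)\simeq \mathbb{Z}/4\mathbb{Z}\oplus\mathbb{Z}/2\mathbb{Z}$, induced by restriction $\mathrm{Gal}(L^{\Sigma}(k_2)/k_2)\to\mathrm{Gal}(L^{\Sigma}(k_1)/k_1)$, shows $\mathrm{rank}_4(A^{\Sigma}(k_2))\ge 1$. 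So it remains to rule out $A^{\Sigma}(k_2)$ having two cyclic factors of order at least $4$. Equivalently, we must show $A^{\Sigma}(k_2)/4A^{\Sigma}(k_2)$ is not $\mathbb{Z}/4\mathbb{Z}\oplus\mathbb{Z}/4\mathbb{Z}$, i.e.\ that $A^{\Sigma}(k_2)/4A^{\Sigma}(k_2)$ has order $8$.

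The plan is to compute the $4$-rank via the maximal elementary abelian subextension and a genus/ambiguous class argument. Write $F=k_2$ and let $F'$ be the fixed field of $2A^{\Sigma}(F)$, which has Galois group $(\mathbb{Z}/2\mathbb{Z})^2$ over $F$. Since $\mathrm{rank}_2$ is stable, $F'=k_2L^{\Sigma}(k)=k_2(\sqrt{p_1},\sqrt{p_2})$ by Remark \ref{4_3}. Then $\mathrm{rank}_4(A^{\Sigma}(F))=2$ would mean both quadratic subextensions $k_2(\sqrt{p_1})$ and $k_2(\sqrt{p_2})$ lie inside cyclic quartic $\Sigma$-split unramified extensions of $k_2$ in a compatible way. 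A more usable reformulation is this: the $4$-rank equals $2-\mathrm{rank}_2$ of the image of the $2$-torsion $A^{\Sigma}(F)[2]$ in $A^{\Sigma}(F)/2A^{\Sigma}(F)$. So it suffices to exhibit one element of order $2$ in $A^{\Sigma}(k_2)$ not lying in $2A^{\Sigma}(k_2)$, which is detected by a nontrivial Artin symbol in $\mathrm{Gal}(k_2(\sqrt{p_1},\sqrt{p_2})/k_2)$.

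The natural candidates for order-$2$ classes are the ambiguous classes of primes above the $p_i$ in $k_2/\mathbb{Q}_2$ (each $p_i^{}$ ramifies in $k_2/\mathbb{Q}_2$, and $\mathfrak{p}^2$ is generated by an element of $\mathbb{Q}_2$, and $A(\mathbb{Q}_2)=0$). For instance, take a prime $\mathfrak{p}_{4,k_2}$ above $p_4$: since $p_4\equiv 15\pmod{16}$, $p_4$ splits in $\mathbb{Q}_2/\mathbb{Q}$, so there are several primes above $p_4$. I would compute the Artin symbols of $[\mathfrak{p}_{4,k_2}]$ (or of $[\mathfrak{p}_{1,k_2}]$, $p_1$ being inert in $\mathbb{Q}_1$) in $k_2(\sqrt{p_1})/k_2$ and $k_2(\sqrt{p_2})/k_2$, via quadratic residue symbols $\bigl(\frac{p_1}{\mathfrak{p}}\bigr)$ computed through residue field degrees, using $\bigl(\frac{p_1}{p_4}\bigr)=-1$. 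If the residue degree of $\mathfrak{p}_{4,k_2}$ over $p_4$ is odd, which holds when $p_4$ splits completely in $\mathbb{Q}_2$ as $p_4\equiv -1\pmod{16}$, the symbol equals $\bigl(\frac{p_1}{p_4}\bigr)=-1$. Hence $[\mathfrak{p}_{4,k_2}]$ is nontrivial modulo $2A^{\Sigma}(k_2)$ and also modulo $D_\Sigma(k_2)$, as the prime above $2$ splits in $F'$. Its order in $A(k_2)$ divides $2$ because $\mathfrak{p}_{4,k_2}^2$ is generated by a prime of $\mathbb{Q}_2$ above $p_4$, which is principal.

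The main obstacle is exactly this: the image of $[\mathfrak{p}_{4,k_2}]$ in $A^{\Sigma}(k_2)$ must genuinely have order $2$ and avoid $2A^{\Sigma}(k_2)$; the Artin symbol computation gives the latter. If instead the residue degree were even, the symbol would vanish; in that case I would switch to $\mathfrak{p}_{3,k_2}$ or $\mathfrak{p}_{2,k_2}$ using $\bigl(\frac{p_1}{p_3}\bigr)=-1$ and $\bigl(\frac{p_2}{p_3}\bigr)=1$, tracking residue degrees of $p_3\equiv5\pmod 8$ in $\mathbb{Q}_2$, which is inert of degree $4$. With one such class, $\mathrm{rank}_4\le 1$, hence equality.
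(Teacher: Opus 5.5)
Your proposal is correct and follows the paper's proof essentially step for step. You get $\mathrm{rank}_2(A^{\Sigma}(k_2))=2$ from Proposition \ref{FT1}~(2) and $\mathrm{rank}_4\ge 1$ from the surjection onto $A^{\Sigma}(k_1)$; then you use the class of $\mathfrak{p}_{4,k_2}$, which has order dividing $2$ since $|A(\mathbb{Q}_2)|=1$, is not in $2A^{\Sigma}(k_2)$ because $p_4$ splits completely in $\mathbb{Q}_2$ and $\left(\frac{p_1}{p_4}\right)=-1$ makes it inert in $k_2(\sqrt{p_1})/k_2$, and so has nonzero image in $A^{\Sigma}(k_2)/2A^{\Sigma}(k_2)$, giving $\mathrm{rank}_4\le 1$. (Your fallback paragraph is unnecessary, and it would not have worked anyway: $p_1,p_2,p_3$ are inert of degree $4$ in $\mathbb{Q}_2$, so every rational residue symbol at the primes above them is trivial.)
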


\begin{proof}
We have $\mathrm{rank}_2(A^{\Sigma}(k_2))=2$ by Lemmas \ref{4_1}, \ref{4_4} and  Proposition \ref{FT1}.
The norm map $N_{k_2/k_1}: A(k_2) \to A(k_1)$ induces a surjective homomorphism  $A^{\Sigma}(k_2) \twoheadrightarrow A^{\Sigma}(k_1)$, and hence we obtain $2A^{\Sigma}(k_2)/4A^{\Sigma}(k_2)\twoheadrightarrow 2A^{\Sigma}(k_1)/4A^{\Sigma}(k_1)$.
Since $\mathrm{rank}_4(A^{\Sigma}(k_1)) = 1$ by Lemma \ref{4_4}, we have $\mathrm{rank}_4(A^{\Sigma}(k_2))\geq 1$.
We suppose that $\mathrm{rank}_4(A^{\Sigma}(k_2))=2$. 
Then we have $A^{\Sigma}(k_2)[2] \subset 2A^{\Sigma}(k_2)$. 
Let $a$ denote the image of the ideal class $[\mathfrak{p}_{4,k_2}]$ in $A^{\Sigma}(k_2)$.
We see that $p_4$ splits completely in $\mathbb{Q}_2/\mathbb{Q}$ and $\mathfrak{p}_{4,\mathbb{Q}_2}$ is ramified in $k_2/\mathbb{Q}_2$. 
Since $|A(\mathbb{Q}_2)|=1$, the order of $a$ is at most two, which implies $a \in A^{\Sigma}(k_2)[2]$. 
Since $k(\sqrt{p_1}) \cap k_2 = k$, it follows that $k_2(\sqrt{p_1})/k_2$ is a $\Sigma$-decomposed unramified quadratic extension.
Since $\mathfrak{p}_{4,k_2}$ is inert in $k_2(\sqrt{p_1})/k_2$, the restriction of the Artin symbol $\left(\frac{k_2L^{\Sigma}(k)/k_2}{\mathfrak{p}_{4,k_2}}\right)$ to $k_2(\sqrt{p_1})$ is nontrivial. 
Hence, $\left(\frac{k_2L^{\Sigma}(k)/k_2}{\mathfrak{p}_{4,k_2}}\right)$ is nontrivial. 
On the other hand, the Artin map $A^{\Sigma}(k_2)\rightarrow \mathrm{Gal}(k_2L^{\Sigma}(k)/k_2)$ sends every element of $2A^{\Sigma}(k_2)$ to the identity. 
Indeed, since $\mathrm{Gal}(k_2L^{\Sigma}(k)/k_2) \simeq A^{\Sigma}(k) \simeq \mathbb{Z}/2\mathbb{Z}\oplus\mathbb{Z}/2\mathbb{Z}$, this Galois group has exponent $2$. 
Thus, the image of the square of any ideal class is trivial.
Therefore, we have $a \notin 2A^{\Sigma}(k_2)$. 
This is a contradiction.
Therefore, we have $\mathrm{rank}_4(A^{\Sigma}(k_2))=1$.
This completes the proof.
\end{proof}

\begin{proof}[Proof of Theorem \ref{MT1}]
Recall that  $2$ is ramified in $k/\mathbb{Q}$. 
Furthermore, by Lemmas \ref{4_1}, \ref{4_4}, and \ref{4_6}, all the conditions of Corollary \ref{MT3} are satisfied, and hence we have $X^\Sigma(k_\infty) \simeq \mathbb Z/4\mathbb Z\oplus\mathbb Z/2\mathbb Z$.
The natural map $X^\Sigma(k_\infty)\to \mathrm{Gal}(L^{\Sigma}(k_2)/k_2)$ is surjective and $\mathrm{Gal}(L^{\Sigma}(k_2)/k_2)\simeq A^{\Sigma}(k_2)$. 
Hence, we have $|A^{\Sigma}(k_2)|\le 8$.
On the other hand, the norm map $N_{k_2/k_1}: A(k_2) \to A(k_1)$ induces a surjective homomorphism $A^{\Sigma}(k_2) \twoheadrightarrow A^{\Sigma}(k_1)$, and $ A^\Sigma(k_1) \simeq \mathbb Z/4\mathbb Z\oplus\mathbb Z/2\mathbb Z$.
Hence, we have $|A^{\Sigma}(k_2)|\ge 8$.
It follows that $|A^{\Sigma}(k_2)|=8$.
Therefore, the above surjective homomorphism is an isomorphism, and hence $A^{\Sigma}(k_2) \simeq A^{\Sigma}(k_1) \simeq \mathbb Z/4\mathbb Z\oplus\mathbb Z/2\mathbb Z$.
Finally, we determine the structure of $A(k_2)$. 
Recall that the second layer of the cyclotomic $\mathbb{Z}_2$-extension of $\mathbb{Q}$ is $\mathbb{Q}_2 = \mathbb{Q}(\sqrt{2+\sqrt{2}})$. 
Since the norm map $N_{k_2/k_1} : A(k_2) \to A(k_1)$ is surjective, we have $|A(k_2)| \ge |A(k_1)| = 16$. 
On the other hand, note that $\mathfrak{l}_{k_2}^2 = \sqrt{2+\sqrt{2}}\mathcal{O}_{k_2}$. 
This implies that $[\mathfrak{l}_{k_2}]^2 = 1$ in $A(k_2)$, and hence the order of $D_{\Sigma}(k_2) = \langle [\mathfrak{l}_{k_2}] \rangle$ is at most $2$. 
Since $A^{\Sigma}(k_2) = A(k_2)/D_{\Sigma}(k_2) \simeq \mathbb{Z}/4\mathbb{Z} \oplus \mathbb{Z}/2\mathbb{Z}$, we have $|A(k_2)| \le 8 \cdot 2 = 16$. 
It follows that $|A(k_2)|=16$.
Therefore, the norm map is an isomorphism, and hence $A(k_2) \simeq A(k_1) \simeq \mathbb Z/4\mathbb Z\oplus\mathbb Z/2\mathbb Z\oplus\mathbb Z/2\mathbb Z$.
By Proposition \ref{FT1} (1), $A(k_n)\simeq \mathbb Z/4\mathbb Z\oplus\mathbb Z/2\mathbb Z\oplus\mathbb Z/2\mathbb Z $ for all $n\geq 1 $.
Consequently, $X(k_\infty) \simeq \mathbb Z/4\mathbb Z \oplus \mathbb Z/2\mathbb Z \oplus \mathbb Z/2\mathbb Z$.
In particular, $\lambda_2(k)=\mu_2(k)=0, \ \nu_2(k)=4$.
This completes the proof of Theorem \ref{MT1}.
\end{proof}

\bigskip
\begin{acknowledgements}
I am especially grateful to my advisor, Prof.\ Yasushi Mizusawa, for his guidance in seminars and throughout the preparation of this paper. 
I would also like to thank Prof.\ Thomas Geisser for carefully reading the manuscript and providing valuable comments. 
\end{acknowledgements}

\begin{reference} 

\bibitem{AM25}
A. Assarrar and A. Mouhib, 
On the unramified Abelian Iwasawa module of some number fields, 
Ramanujan J. \textbf{68} (2025), no.\ 1, Paper No.\ 17, 12 pp.

\bibitem{JA25}
J. \'Avila, 
Iwasawa module of the cyclotomic $\Bbb Z_2$-extension of certain real quadratic fields, Ramanujan J. {\bf 67} (2025), no.~1, Paper No. 6, 21 pp.

\bibitem{BM24}
K. Boulajhaf and A. Mouhib,
Cyclicity of the 2-decomposed unramified Iwasawa module, 
J. Number Theory \textbf{263} (2024), 234--254.

\bibitem{Bru67}
A. Brumer,
On the units of algebraic number fields, Mathematika \textbf{14} (1967), 121--124.

\bibitem{FW79}
B. Ferrero and L.~C. Washington, 
The Iwasawa invariant $\mu \sb{p}$\ vanishes for abelian number fields,
Ann.\ of Math.\ (2) \textbf{109} (1979), no.\ 2, 377--395.

\bibitem{Fuk94} 
T. Fukuda, 
Remarks on $\mathbf Z_p$-extensions of number fields, 
Proc.\ Japan Acad.\ Ser.\ A \textbf{70} (1994), 264--266.

\bibitem{FK05}
T. Fukuda and K. Komatsu, 
On the Iwasawa $\lambda$-invariant of the cyclotomic $\mathbf Z_2$-extension of a real quadratic field, 
Tokyo J.\ Math.\ \textbf{28} (2005), no.\ 1, 259--264.

\bibitem{FKOT16}
T. Fukuda, K. Komatsu, M. Ozaki and T. Tsuji, 
On the Iwasawa $\lambda$-invariant of the cyclotomic $\mathbb Z_2$-extension of $\mathbb Q(\sqrt{p})$, III, 
Funct.\ Approx.\ Comment.\ Math.\ \textbf{54} (2016), no.\ 1, 7--17.

\bibitem{Gre76} 
R. Greenberg, 
On the Iwasawa invariants of totally real number fields, 
Amer.\ J.\ Math.\ \textbf{98} (1976), no.\ 1, 263--284. 

\bibitem{Iwa59}
K. Iwasawa, 
On $\Gamma $-extensions of algebraic number fields, 
Bull.\ Amer.\ Math.\ Soc.\ \textbf{65} (1959), 183--226.

\bibitem{Iwa73}
K. Iwasawa, On ${\bf Z}\sb{l}$-extensions of algebraic number fields, 
Ann. of Math. (2) {\bf 98} (1973),
246--326.

\bibitem{Kub} 
T. Kubota, 
\"Uber den bizyklischen biquadratischen Zahlk\"orper, 
Nagoya Math. J. {\bf 10} (1956), 65--85.

\bibitem{Kum20} 
N. Kumakawa, 
On the Iwasawa $\lambda$-invariant of the cyclotomic $\mathbb Z_2$-extension of $\mathbb Q(\sqrt{pq})$ and the $2$-part of the class number of $\mathbb Q(\sqrt{pq},\sqrt{2+\sqrt{2}})$, 
Int.\ J.\ Number Theory \textbf{17} (2021), no.\ 4, 931--958. 

\bibitem{Kum25} 
N. Kumakawa, 
A weak form of Greenberg's conjecture for the cyclotomic $\mathbb Z_2$-extension of real quadratic fields, 
Funct. Approx. Comment. Math. {\bf 72} (2025), no.~1, 47--59.

\bibitem{Kur} 
S. Kuroda, 
\"Uber den Dirichletschen K\"orper, 
J. Fac. Sci. Imp. Univ. Tokyo Sect. I. {\bf 4} (1943), 383--406.

\bibitem{LS24}
H. Laxmi and A. Saikia, 
$\mathbb Z_2$-extension of real quadratic fields with $\mathbb Z/2\mathbb Z$ as 2-class group at each layer, 
Ramanujan J. {\bf 64} (2024), no.\ 4, 1285--1301.

\bibitem{Lem13} 
F. Lemmermeyer, 
The ambiguous class number formula revisited, 
J.\ Ramanujan Math.\ Soc.\ \textbf{28} (2013), no.\ 4, 415--421. 

\bibitem{Miz04}
Y. Mizusawa, 
On the Iwasawa invariants of $\mathbb Z_2$-extensions of certain real quadratic fields, 
Tokyo J.\ Math.\ \textbf{27} (2004), no.\ 1, 255--261.

\bibitem{Miz10} 
Y. Mizusawa,
On unramified Galois $2$-groups over $\mathbb Z_2$-extensions of real quadratic fields, 
Proc.\ Amer.\ Math.\ Soc.\ \textbf{138} (2010), no.\ 9, 3095--3103.

\bibitem{Miz26} 
Y. Mizusawa and Y. Saito, 
On $\Bbb Z_2$-extensions of real quadratic fields with four odd ramified prime numbers, 
Ramanujan J. {\bf 70} (2026), no.~1, Paper No. 16, 9 pp.

\bibitem{MM27}
Y. Mizusawa and A. Mouhib,
Real quadratic fields with metacyclic 2-class field towers over their
$\Bbb{Z}_2$-extensions,
J. Number Theory {\bf 293} (2027), 182--214.

\bibitem{M23}
A. Mouhib, 
The structure of the unramified abelian Iwasawa module of some number fields, 
Pacific J.\ Math.\ \textbf{323} (2023), no.\ 1, 173--184.

\bibitem{MM11} 
A. Mouhib and A. Movahhedi, 
Cyclicity of the unramified Iwasawa module, 
Manuscripta Math.\ \textbf{135} (2011), no.\ 1-2, 91--106.

\bibitem{Nis06} 
Y. Nishino, 
On the Iwasawa invariants of the cyclotomic $\mathbb Z_2$-extensions of certain real quadratic fields, 
Tokyo J.\ Math.\ \textbf{29} (2006), no.\ 1, 239--245.

\bibitem{OT97} 
M. Ozaki and H. Taya, 
On the Iwasawa $\lambda_2$-invariants of certain families of real quadratic fields, 
Manuscripta Math.\ \textbf{94} (1997), no.\ 4, 437--444. 

\bibitem{Oza09} 
M. Ozaki, 
Construction of real abelian fields of degree $p$ with $\lambda_p=\mu_p=0$, 
Int. J. Open Probl. Comput. Sci. Math. {\bf 2} (2009), 
no.\ 3, 342--351.

\bibitem{PARI}
The PARI~Group, PARI/GP version \texttt{2.13.1}, Univ.\ Bordeaux, 2021, 
\linebreak\texttt{http://pari.math.u-bordeaux.fr/}.

\bibitem{Pag22}
L. Pagani,
Greenberg's conjecture for real quadratic fields and the cyclotomic $\mathbb Z_2$-extensions, 
Math. Comp. {\bf 91} (2022), no.~335, 1437--1467.

\bibitem{RR}
L. R\'edei and H. Reichardt, 
Die Anzahl der durch vier teilbaren Invarianten der Klassengruppe eines beliebigen quadratischen Zahlk\"orpers, 
J.\ Reine Angew.\ Math.\ \textbf{170} (1934), 69--74.

\bibitem{Was}
L. C. Washington, 
Introduction to cyclotomic fields, second edition, 
Graduate Texts in Mathematics \textbf{83}, Springer-Verlag, New York, 1997.

\bibitem{Web86}
H. Weber, 
Theorie der Abel'schen Zahlk\"orper,
Acta Math. {\bf 8} (1886), no.~1, 193--263.

\bibitem{Yam00}
G. Yamamoto, Iwasawa invariants of abelian $p$-extension fields, 
Thesis (Doctoral dissertation),
Waseda University, 2001, 45 pp., 
http://hdl.handle.net/2065/40981.

\end{reference}

\end{document}